\documentclass[12pt]{amsart}
\usepackage{amsmath}
\usepackage{amsthm}
\usepackage{amsfonts}
\usepackage{amssymb}
\usepackage{pgf,tikz}
\usetikzlibrary{decorations.pathreplacing,angles,quotes}
\usepackage{graphicx}
\usepackage{hyperref}

\newcommand*{\darr}{ {\downarrow} }
\newcommand*{\uarr}{ {\uparrow} }
\newcommand{\meet}{\land}
\newcommand{\join}{\lor}
\newcommand{\impla}{\rightarrow}
\newcommand{\nega}{\neg}
\newcommand{\xn}{\{ x_n \}}
\newcommand{\zn}{\{ z_n \}}
\newcommand{\lifm}{L\times_I^F M}
\newcommand{\xduy}{X \oplus_D^U Y}
\newcommand{\xdy}{X \oplus_D Y}

\newtheorem{theorem}{Theorem}[section]
\newtheorem*{theorem*}{Theorem}
\newtheorem{lemma}[theorem]{Lemma}

\newtheorem*{proposition*}{Proposition}
\newtheorem{corollary}[theorem]{Corollary}
\newtheorem*{corollary*}{Corollary}

\theoremstyle{definition}
\newtheorem{definition}[theorem]{Definition}
\newtheorem*{definition*}{Definition}
\newtheorem{example}[theorem]{Example}
\newtheorem*{example*}{Example}

\newtheorem{remark}[theorem]{Remark}
\newtheorem*{remark*}{Remark}
\newtheorem{notation}[theorem]{Notation}

\setlength{\topmargin}{-0.5in}
\setlength{\textheight}{9in}
\setlength{\oddsidemargin}{0in}
\setlength{\evensidemargin}{0in}
\setlength{\textwidth}{6.5in}

\begin{document}

\title[Characterization of metrizable Esakia spaces]{Characterization of metrizable Esakia spaces via some forbidden configurations}

\author[G. Bezhanishvili]{Guram Bezhanishvili}
\address{Department of Mathematical Sciences\\
New Mexico State University\\
Las Cruces NM 88003\\
USA}
\email{guram@nmsu.edu}

\author[L. Carai]{Luca Carai}
\address{Department of Mathematical Sciences\\
New Mexico State University\\
Las Cruces NM 88003\\
USA}
\email{lcarai@nmsu.edu}

\subjclass[2010]{06D05, 06D20, 06E15, 06D15}

\keywords{Distributive lattice, Heyting algebra, p-algebra, Priestley duality, Esakia duality}

\begin{abstract}
By Priestley duality, each bounded distributive lattice is represented as the lattice of clopen upsets of a Priestley space,
and by Esakia duality, each Heyting algebra is represented as the lattice of clopen upsets of an Esakia space.
Esakia spaces are those Priestley spaces that satisfy the additional condition that the downset of each clopen is clopen.
We show that in the metrizable case Esakia spaces can be singled out by forbidding three simple configurations.
Since metrizability yields that the corresponding lattice of clopen upsets is countable, this provides a characterization
of countable Heyting algebras. We show that this characterization no longer holds in the uncountable case.
Our results have analogues for co-Heyting algebras and bi-Heyting algebras, and they easily generalize to the setting of p-algebras.
\end{abstract}

\maketitle

\section{Introduction}

Priestley duality \cite{pri70,pri72} provides a dual equivalence between the category $\sf Dist$ of bounded distributive lattices and the category
$\sf Pries$ of Priestley spaces; and Esakia duality \cite{esa74} provides a dual equivalence between the category $\sf Heyt$ of Heyting algebras
and the category $\sf Esa$ of Esakia spaces.
To make the paper self-contained, we recall main definitions.

An \textit{ordered topological space} is a triple $(X, \mathcal{T}, \leq)$
such that $(X, \mathcal{T})$ is a topological space and $\leq$ is a partial order on $X$. When we say that an ordered topological space is
compact, metrizable, etc.~we mean that the underlying topological space is compact, metrizable, etc.
As usual, for $A \subseteq X$ we let
\[
\uarr A =\{ x \in X \mid a \leq x \text{ for some } a \in A \}
\]
and
\[
\darr A = \{ x \in X \mid x \leq a \text{ for some } a \in A \}.
\]
If $A= \{x\}$, then we write $\uarr x$ and $\darr x$, respectively.
We call $A$ an \emph{upset} if $\uarr A=A$ and a \emph{downset} if $\darr A=A$.

\begin{definition}
\begin{enumerate}
\item[]
\item An ordered topological space $(X, \mathcal{T}, \leq)$ satisfies the \textit{Priestley separation axiom} if
$x \nleq y$ implies that there is a clopen upset $U$ such that $x \in U$ and $y \notin U$.
\item A \textit{Priestley space} is an ordered topological space that is compact and satisfies the Priestley separation axiom.
\end{enumerate}
\end{definition}

\begin{notation}
To simplify notation, we will suppress $\mathcal T$ and $\le$ and denote a Priestley space simply by $X$.
\end{notation}

\begin{remark}
The following facts about Priestley spaces are well known:
\begin{enumerate}
\item Each Priestley space is a Stone space (compact, Hausdorff, zero-dimen\-sional space).
\item If $F$ is closed, then so are $\uarr F$ and $\darr F$.
\item There exist Priestley spaces such that the downset or upset of an open set may not be open.
\end{enumerate}
\end{remark}

The Priestley space of a bounded distributive lattice $L$ is constructed by taking the set $X$ of prime filters of $L$, the order on $X$
is the inclusion order, and the topology on $X$ is given by the basis
\[
\{\alpha(a)\setminus\alpha(b)\mid a,b\in L\}
\]
where
\[
\alpha(a)=\{x\in X\mid a\in x\}.
\]
Then $\alpha$ is an isomorphism of $L$ onto the lattice of clopen upsets of $X$.

\begin{definition}
A Priestley space is an \textit{Esakia space} if the downset of each open set is open (equivalently, the downset of each clopen set is clopen).
\end{definition}

\begin{remark}
In an Esakia space, the upset of an open set may not be open.
\end{remark}

Heyting algebras are the bounded distributive lattices $L$ with an additional binary operation $\to$ of relative pseudo-complement
which satisfies, for all $a,b,x\in L$:
\[
a\wedge x\le b \mbox{ iff } x\le a\to b.
\]
It turns out that the lattice of clopen upsets of a Priestley space $X$ is a Heyting algebra iff it is an Esakia space,
where the relative pseudo-complement of two clopen upsets $U,V$ is given by $X \setminus \darr (U \setminus V)$.

The three spaces $Z_1$, $Z_2$, and $Z_3$ depicted in Figure~\ref{fig:3forbidden} are probably the simplest examples of Priestley spaces
that are not Esakia spaces. Topologically each of the three spaces is homeomorphic to the one-point compactification of the countable
discrete space $\{y\}\cup\{z_n\mid n\in\omega\}$, with $x$ being the limit point of $\{z_n\mid n\in\omega\}$. For each of the three
spaces, it is straightforward to check that with the partial order whose Hasse diagram is depicted in Figure~\ref{fig:3forbidden},
the space is a Priestley space. On the other hand, neither of the three spaces is an Esakia space because $\{y\}$ is open, but
${\downarrow}y=\{x,y\}$ is no longer open.

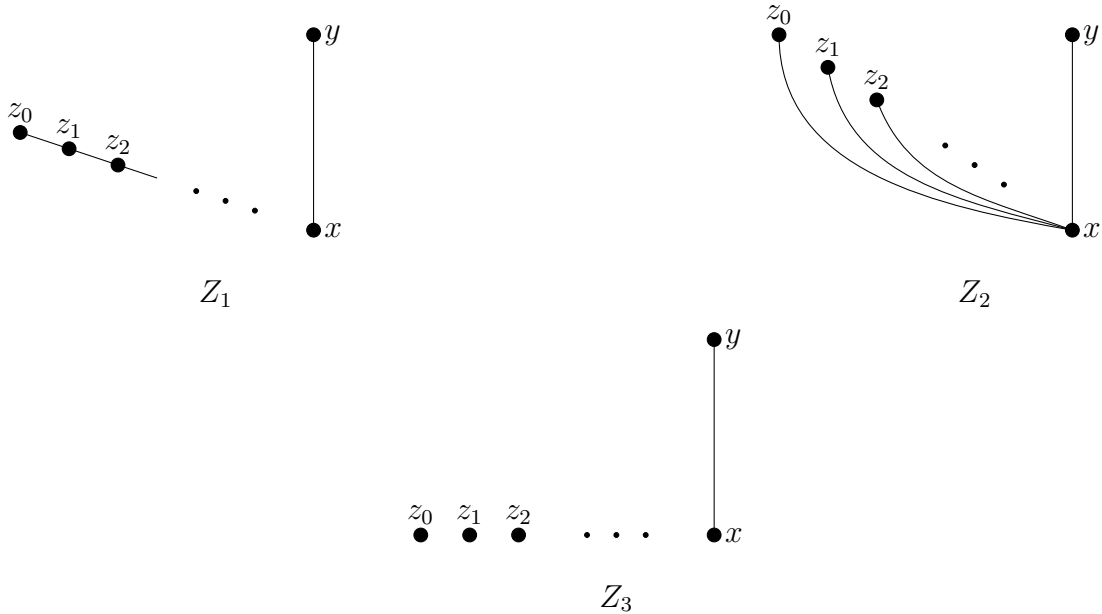
\begin{figure}[!h]
\begin{center}
\begin{tikzpicture}[scale=1.3]
\draw [fill=black] (0,0) circle (2.0pt);
\draw [fill=black] (0,2) circle (2.0pt);
\draw [fill=black] (-2,2/3) circle (2.0pt);
\draw [fill=black] (-2.5,2.5/3) circle (2.0pt);
\draw [fill=black] (-3,1) circle (2.0pt);
\node [right] at (0,0) {$x$};
\node [right] at (0,2) {$y$};
\node [above] at (-2,2/3) {$z_2$};
\node [above] at (-2.5,2.5/3) {$z_1$};
\node [above] at (-3,1) {$z_0$};
\draw (0,0) -- (0,2);
\draw (-1.6,1.6/3) -- (-2,2/3) -- (-2.5,2.5/3) -- (-3,1);
\draw [fill=black] (-0.6,0.6/3) circle (0.7pt);
\draw [fill=black] (-0.9,0.9/3) circle (0.7pt);
\draw [fill=black] (-1.2,1.2/3) circle (0.7pt);
\node [below] at (-1,-0.4) {$Z_1$};
\end{tikzpicture}
\hspace{2in}
\begin{tikzpicture}[scale=1.3]
\draw [fill=black] (0,0) circle (2.0pt);
\draw [fill=black] (0,2) circle (2.0pt);
\draw [fill=black] (-2,2*2/3) circle (2.0pt);
\draw [fill=black] (-2.5,2.5*2/3) circle (2.0pt);
\draw [fill=black] (-3,3*2/3) circle (2.0pt);
\node [right] at (0,0) {$x$};
\node [right] at (0,2) {$y$};
\node [above] at (-2,2*2/3) {$z_2$};
\node [above] at (-2.5,2.5*2/3) {$z_1$};
\node [above] at (-3,3*2/3) {$z_0$};
\draw (0,0) -- (0,2);
\draw (0,0) to [out=160,in=290] (-2,2*2/3);
\draw (0,0) to [out=165,in=280] (-2.5,2.5*2/3);
\draw (0,0) to [out=170,in=270] (-3,3*2/3);
\draw [fill=black] (-0.7,0.7*2/3) circle (0.7pt);
\draw [fill=black] (-1,1*2/3) circle (0.7pt);
\draw [fill=black] (-1.3,1.3*2/3) circle (0.7pt);
\node [below] at (-1,-0.4) {$Z_2$};
\end{tikzpicture}
\newline
\begin{tikzpicture}[scale=1.3]
\draw [fill=black] (0,0) circle (2.0pt);
\draw [fill=black] (0,2) circle (2.0pt);
\draw [fill=black] (-2,0) circle (2.0pt);
\draw [fill=black] (-2.5,0) circle (2.0pt);
\draw [fill=black] (-3,0) circle (2.0pt);
\node [right] at (0,0) {$x$};
\node [right] at (0,2) {$y$};
\node [above] at (-2,0) {$z_2$};
\node [above] at (-2.5,0) {$z_1$};
\node [above] at (-3,0) {$z_0$};
\draw (0,0) -- (0,2);
\draw [fill=black] (-0.7,0) circle (0.7pt);
\draw [fill=black] (-1,0) circle (0.7pt);
\draw [fill=black] (-1.3,0) circle (0.7pt);
\node [below] at (-1,-0.4) {$Z_3$};
\end{tikzpicture}
\end{center}
\caption{The three Priestley spaces $Z_1$, $Z_2$, and $Z_3$.}
\label{fig:3forbidden}
\end{figure}

In this paper we show that a metrizable Priestley space is not an Esakia space exactly when one of these three spaces can be embedded in it.
The embeddings we consider are special in that the point $y$ plays a special role. We show that this condition on the embeddings, as well as
the metrizability condition, cannot be dropped by presenting some counterexamples. In doing so, we develop a way to combine two Priestley spaces
which has proved to be useful in building Priestley spaces that are not Esakia spaces. An advantage of our characterization lies in the fact
that when a metrizable Priestley space $X$ is presented by a Hasse diagram, it is easy to verify whether or not $X$ contains one of the three
``forbidden configurations".

The paper is organized as follows. In Section~2 we present the main result by showing that a metrizable Priestley space is not an Esakia space
iff a copy of one of the three forbidden configurations sits inside $X$ in a special way. In Section~3 we translate our main result
into the dual lattice-theoretic statement, yielding a characterization of countable Heyting algebras. This characterization easily generalizes
to the setting of p-algebras, and also has analogues for co-Heyting and bi-Heyting algebras. In Section~4 we present the ``down-up sum" of
Priestley spaces, and its dual ``ideal-filter product" of lattices. Finally, Section~5 is devoted to counterexamples. We use the down-up sum
to build non-metrizable Priestley spaces that are not Esakia spaces
and yet do not
contain a copy of any of the three forbidden configurations. This shows that there is no obvious generalization of our results to the
non-metrizable setting. We finish by showing that the additional condition on the embeddings cannot be dropped either.

\section{The main theorem}

\begin{definition} \label{def:forbidconf}
Let $X$ be a Priestley space. We say that $Z_i$ ($i=1,2,3$) is a \emph{forbidden configuration} for $X$ if there are a topological and order
embedding $e \colon Z_i\to X$ and an open neighborhood $U$ of $e(y)$ such that $e^{-1}({\downarrow}U)=\{x,y\}$.
\end{definition}

The next result shows that whether a metrizable Priestley space is an Esakia space is determined by these three forbidden configurations.
The key assumption of metrizability is used to show that if $x$ is a limit point of a set, then there is a sequence in the set converging to $x$.
This can be done already for the Priestley spaces that are sequential spaces (see Remark~\ref{rem:sequential}). The necessity of the sequentiality
assumption will be discussed in more detail in Section~\ref{section:counterexamples}.

\begin{theorem} \label{thm:maintheorem}
A metrizable Priestley space $X$ is not an Esakia space iff one of $Z_1, Z_2, Z_3$ is a forbidden configuration for $X$.
\end{theorem}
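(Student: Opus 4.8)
The plan is to prove the two directions separately. The easy direction is ($\Leftarrow$): if some $Z_i$ is a forbidden configuration, then $X$ is not an Esakia space. Given the embedding $e\colon Z_i\to X$ and the open neighborhood $U$ of $e(y)$ with $e^{-1}({\darr}U)=\{x,y\}$, I would argue that ${\darr}U$ is not open. Indeed, $e(x)\in{\darr}U$ since $e(x)\le e(y)$. If ${\darr}U$ were open, then $e^{-1}({\darr}U)=\{x,y\}$ would be open in $Z_i$; but $x$ is the limit point of $\{z_n\mid n\in\omega\}$ in every $Z_i$, and no $z_n$ lies in $\{x,y\}$, so $\{x,y\}$ is not open in $Z_i$. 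Hence ${\darr}U$ is open but ${\darr}U$ is not, contradicting the Esakia condition. (A small point to check: $e(x)\in{\darr}U$ really does force failure of openness, using that $e$ is a homeomorphism onto its image and that the image is clopen or at least that $e^{-1}$ of an open set is open — the topological embedding hypothesis handles this.)

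The substantive direction is ($\Rightarrow$): if the metrizable Priestley space $X$ is not an Esakia space, one of the three configurations embeds in the required way. Since $X$ is not an Esakia space, there is a clopen set $C$ with ${\darr}C$ not clopen; as ${\darr}C$ is closed (closure of downsets of closed sets), it is not open, so it has a boundary point. I would shrink $C$ to get a cleaner situation: pick $y\in C$ and a point $x\le y$ such that $x$ is not in the interior of ${\darr}C$, i.e.\ $x$ is a limit of points outside ${\darr}C$. Using the Priestley separation axiom and compactness, I would arrange a clopen upset neighborhood $U$ of $y$ (contained in $C$, say) so that $x\in{\darr}U$ but $x\in\overline{X\setminus{\darr}U}$; replacing $C$ by $U$, I may assume $U$ is a clopen upset, $x\le y\in U$, $x\notin U$ (refine so $y$ is not minimal; if $x$ were the only point below $y$ outside... one has to ensure $x\ne y$, which follows since $x\notin\mathrm{int}({\darr}U)$ while $y\in U\subseteq\mathrm{int}({\darr}U)$), and $x$ is a limit point of $X\setminus{\darr}U$.

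Now metrizability enters: since $x$ is a limit point of $X\setminus{\darr}U$, there is a sequence $\{w_n\}$ in $X\setminus{\darr}U$ with $w_n\to x$. Each $w_n\notin{\darr}U$ means ${\uarr}w_n\cap U=\varnothing$. The idea is to extract from $\{w_n\}$, $x$, and $y$ a copy of one of $Z_1,Z_2,Z_3$. I would set $e(x)=x$, $e(y)=y$, and choose $e(z_n)$ among the $w_n$ (passing to a subsequence). The three configurations correspond to the three qualitatively different ways the chosen points can sit relative to $x$ under the order: $Z_3$ when we can choose the subsequence pairwise incomparable and all incomparable with $x$ (an antichain limiting to $x$, with none below $y$); $Z_1$ when they form (a subsequence that is) a descending chain above... — more precisely one massages the $w_n$ into a chain $z_0 > z_1 > \cdots$ converging to $x$ with $x$ below all of them; and $Z_2$ when instead $x < z_n$ for all $n$ with the $z_n$ pairwise incomparable. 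The combinatorial heart is a Ramsey-type argument: by passing to a subsequence I can assume the $w_n$ are pairwise comparable or pairwise incomparable, and that their relation to $x$ is uniform; then closedness of $\le$ (a Priestley space has closed order) pins down the limits, e.g.\ a descending chain $w_0>w_1>\cdots$ with $w_n\to x$ forces $x\le w_n$ for all $n$. One must also verify the embedding is topological: the subspace $\{x,y\}\cup\{z_n\}$ with the subspace topology from $X$ is the one-point compactification with $x$ the limit point — this holds because $w_n\to x$ and $y$, being $\ne x$ and a non-limit, is isolated in the subspace, using that $X$ is Hausdorff and that we may discard finitely many terms and intersect with a clopen separating $y$ from $x$. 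Finally the neighborhood condition $e^{-1}({\darr}U)=\{x,y\}$ is exactly what we built in: $y\in U\subseteq{\darr}U$, $x\le y$ so $x\in{\darr}U$, and each $z_n=w_n\notin{\darr}U$.

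The main obstacle I anticipate is the bookkeeping in the ($\Rightarrow$) direction: making the right choice of $y$ and $x$ so that after refining $C$ to a clopen upset $U$ the point $x$ is genuinely a boundary point of ${\darr}U$ and genuinely distinct from $y$, and then carrying out the subsequence extraction so that simultaneously (i) $\{z_n\}$ realizes one of the three order-patterns, (ii) $z_n\to x$ in $X$, (iii) the subspace is homeomorphic to the one-point compactification (so $y$ must be separated off and each $z_n$ must be isolated), and (iv) no $z_n$ lies in ${\darr}U$. Each of these is individually routine given a metrizable Priestley space, but coordinating them — especially ensuring the order-pattern is one of exactly these three and not some mixed behavior — is where the real argument lies, and is presumably where the sequentiality/metrizability hypothesis is essential.
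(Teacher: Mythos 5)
Your proposal follows essentially the same route as the paper: use metrizability to extract a sequence outside ${\downarrow}U$ converging to a point $x\in{\downarrow}U$ lying below some $y\in U$, apply the chain/antichain dichotomy and uniformize the relation of the sequence to $x$ to land in one of the three order patterns, and check that the resulting subspace is the one-point compactification with $x$ as the limit point and with ${\downarrow}U$ pulled back to $\{x,y\}$. The only detail you gloss over is ruling out $y\le z_n$ for infinitely many $n$ (needed so the $z_n$ are incomparable with $y$, as the order-embedding into $Z_1,Z_2,Z_3$ requires); the paper handles this by noting that ${\uparrow}y$ is closed, so otherwise $x=\lim z_n$ would lie above $y$ --- an argument of exactly the kind you already invoke.
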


\begin{proof}
First suppose that one of the $Z_i$ is a forbidden configuration for $X$. Since $e \colon Z_i\to X$ is continuous and $e^{-1}(\darr U)= \{ x,y \}$
is not open in $Z_i$, we conclude that $\darr U$ is not open in $X$. Thus, $X$ is not an Esakia space.

Conversely, suppose that $X$ is not an Esakia space. Then there is an
open
subset $U$ of $X$ such that $\darr U$ is not open.
Therefore, $(\darr U)^c$ is not closed. Since $X$ is metrizable, there is a sequence $\xn \subseteq (\darr U)^c$ such that
$\lim x_n =x \in \darr U$. As $X$ is Hausdorff, $\xn$ has to be infinite, hence we may assume that $x_n \ne x_m$ for $n \ne m$.
Because $U$ is open, we have $x \in \darr U \setminus U$. Let $y \in U$ be such that $x \leq y$. Then $x < y$.

Observe that $x_n \nleq x$ and $x_n \nleq y$ for any $n$ because otherwise $x_n \in \darr U$.
In addition, if there is $M$ such that $y \leq x_n$ for all $n \geq M$, then $x_n \in \uarr y$ for all $n \geq M$. Since $\uarr y$ is
closed and $x = \lim x_n$, this would yield $x \in \uarr y$, a contradiction. Therefore, $y \nleq x_n$ for some $n \geq M$. Thus, we
can select a subsequence of $\{ x_n \}$ each member of which is not above $y$. Hence, we may assume without loss of generality that
$x_n$ and $y$ are incomparable for all $n$. We now have two cases to consider.

{\bf Case 1:} There is an infinite subsequence $\{y_n\}$ of $\xn$ that is totally ordered by $\le$.
Since $\{y_n\}$ is an infinite subsequence of $\xn$, we have $\lim y_n = x$.
Consider the closure $\overline{\{y_n\}}$. As $\{y_n\}$ is totally ordered, by \cite[Lem.~3.1]{bm11}, $\overline{\{y_n\}}$ is also totally
ordered and has max and min. Since $x \in \overline{\{y_n\}}$ which is totally ordered, for each $n$ we have $x \leq y_n$ or $y_n \leq x$.
But, as we already observed, $y_n \nleq x$. Thus, $x \leq y_n$ for each $n$. Since $x \notin (\darr U)^c$, we have $x < y_n$. We now define
recursively a subsequence $\zn$ of $\{y_n\}$ such that $z_0> z_1 > z_2 > \cdots$.

Set $z_0=y_0$. If $z_k=y_{n_k}$ is already defined, then since $\lim y_n=x$ and $x < y_{n_k}$, there is a clopen downset $V$ of $X$
such that $x \in V$, $y_{n_k}=z_k \notin V$, and $V$ contains an infinite subset of $\{y_n\}$. So there is $y_{n_{k+1}} \in V$ such that
$n_{k+1}> n_k$. Therefore, $y_{n_{k+1}} < y_{n_k}$. Set $z_{k+1}=y_{n_{k+1}}$. We thus obtain a sequence $z_0> z_1 > z_2 > \cdots$ such that
$\lim z_n =x$ and each $z_n$ is incomparable with $y$.

Let $Z=\{ y,x \} \cup \zn \subseteq X$, and view $Z$ as an ordered topological space with the order and topology inherited from $X$.
Since $Z \cap U= \{ y \}$ and $\{y \}$ is closed in $X$, we have that $\{ y \}$ is clopen in $Z$. For each $m$, we show that
the singleton $\{ z_m \}$ is clopen in $Z$. As $x < z_m$, there is a clopen downset $V$ of $X$ such that $x \in V$ and
$z_m \notin V$, so $V^c \cap Z$ is finite and contains $z_m$. Since $X$ is Hausdorff, so is $V^c \cap Z$.
Because every finite Hausdorff space is discrete, $\{ z_m \}$ is clopen in $V^c \cap Z$, which is clopen in $Z$. Thus, the
singleton $\{ z_m \}$ is clopen in $Z$.

Opens in $Z$ containing $x$ are exactly the cofinite subsets of $Z$ because $\lim z_n=x$ and all the singletons except $\{ x \}$ are clopen.
Therefore, $Z$ is order-isomorphic and homeomorphic to the Priestley space $Z_1$.

{\bf Case 2:} There is no infinite totally ordered subsequence of $\xn$. Since every infinite poset contains either an infinite chain
or an infinite antichain (see, e.g., \cite[Thm.~1.14]{rom08}),
there is an infinite subsequence $\{y_n\}$ of $\xn$ that is an antichain. As $\{y_n\}$ is an infinite subsequence of $\xn$,
we have that $\lim y_n=x$.
Our goal is to select a subsequence $\{z_n \}$ of $\{ y_n \}$ so that either $Z_2$ or $Z_3$ becomes a forbidden configuration.
Which of the two becomes a forbidden configuration depends on whether or not ${\uparrow} x \cap \{ y_n \}$ is infinite.

{\bf Case 2a:} ${\uparrow} x \cap \{y_n\}$ is infinite. Then $\zn := {\uparrow} x \cap \{y_n\}$ is an infinite subsequence of $\{y_n\}$ such
that $\lim z_n = x$ and each $z_n$ is incomparable with $y$. Let $Z=\{ y,x \} \cup \zn \subseteq X$, and view $Z$ as an ordered topological
space with the order and topology inherited from $X$. Since $x < z_m$ for each $m$, by arguing as in Case 1 we obtain that $Z$ is order-isomorphic
and homeomorphic to the Priestley space $Z_2$.

{\bf Case 2b:} ${\uparrow} x \cap \{ y_n \}$ is finite. Then $\zn := ({\uparrow} x)^c \cap \{y_n\}$ is an infinite subsequence of $\{y_n\}$
such that $\lim z_n =x$ and each $z_n$ is incomparable with $y$. Let $Z=\{ y,x \} \cup \zn \subseteq X$, and view $Z$ as an ordered topological
space with the order and topology inherited from $X$. Since $x$ and $z_m$  are incomparable for each $m$, by arguing as in Case 1 we obtain
that $Z$ is order-isomorphic and homeomorphic to the Priestley space $Z_3$.
\end{proof}

\begin{remark}\label{rem:sequential}
In the proof of Theorem~\ref{thm:maintheorem} metrizability was used to find in a set that is not closed a sequence
converging outside of it. We recall (see, e.g., \cite[p.~53]{eng89}) that a topological space $X$ is a \textit{sequential space}
provided a set $A$ is closed in $X$ iff together with each sequence $A$ contains all its limits.
Thus, Theorem~\ref{thm:maintheorem} holds not only for metrizable Priestley spaces, but more generally, for sequential Priestley spaces.
\end{remark}

\section{Algebraic meaning of the result}

Let $L_1$, $L_2$, and $L_3$ be the dual lattices of $Z_1$, $Z_2$, and $Z_3$, respectively.
Clopen upsets of $Z_1$ are the whole space, the empty set, ${\uparrow} z_n$, and ${\uparrow} z_n \cup \{y\}$ for $n\in\omega$.
Thus, $L_1$ can be depicted as in Figure~\ref{fig:L1}. Note that $L_1$ is not a Heyting algebra since $\nega c$ does not exist.

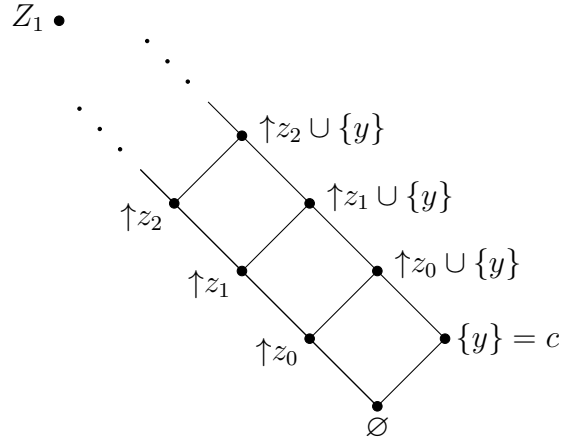
\begin{figure}[!h]
\begin{center}
\begin{tikzpicture}[scale=0.9]
\draw [fill=black] (0,0) circle (2.0pt);
\draw [fill=black] (-1,1) circle (2.0pt);
\draw [fill=black] (0,2) circle (2.0pt);
\draw [fill=black] (1,1) circle (2.0pt);
\draw [fill=black] (-1,3) circle (2.0pt);
\draw [fill=black] (-2,2) circle (2.0pt);
\draw [fill=black] (-2,4) circle (2.0pt);
\draw [fill=black] (-3,3) circle (2.0pt);
\draw [fill=black] (-4.7,5.7) circle (2.0pt);
\draw (0,0) -- (-1,1) -- (-2,2) -- (-3,3) -- (-3.5,3.5) ;
\draw (1,1) -- (0,2) -- (-1,3) -- (-2,4) -- (-2.5,4.5) ;
\draw (0,0) -- (-1,1) -- (-2,2) -- (-3,3) -- (-3.5,3.5) ;
\draw (0,0) -- (1,1);
\draw (-1,1) -- (0,2);
\draw (-2,2) -- (-1,3);
\draw (-3,3) -- (-2,4);
\draw [fill=black] (-3.8,3.8) circle (0.7pt);
\draw [fill=black] (-4.1,4.1) circle (0.7pt);
\draw [fill=black] (-4.4,4.4) circle (0.7pt);
\draw [fill=black] (-2.8,4.8) circle (0.7pt);
\draw [fill=black] (-3.1,5.1) circle (0.7pt);
\draw [fill=black] (-3.4,5.4) circle (0.7pt);
\node [right] at (1,1) {$\{y \}=c$};
\node [below] at (0,0) {$\varnothing$};
\node [right] at (0.1,2.1) {$\uarr z_0 \cup \{ y \}$};
\node [right] at (-0.9,3.1) {$\uarr z_1 \cup \{ y \}$};
\node [right] at (-1.9,4.1) {$\uarr z_2 \cup \{ y \}$};
\node [left] at (-1,0.8) {$\uarr z_0$};
\node [left] at (-2,1.8) {$\uarr z_1$};
\node [left] at (-3,2.8) {$\uarr z_2$};
\node [left] at (-4.75,5.75) {$Z_1$};
\end{tikzpicture}
\end{center}
\caption{The lattice $L_1$.}
\label{fig:L1}
\end{figure}

Clopen upsets of $Z_2$ are the whole space, the empty set, and the finite subsets of $\{ y\}\cup\{z_n\mid n\in\omega \}$.
Therefore, $L_2$ is isomorphic to the lattice of finite subsets of $\omega$ together with a top element; see Figure~\ref{fig:L2}.
Thus, $L_2$ is not a Heyting algebra because $\nega F$ does not exist for any finite subset $F$ of $\omega$.

\begin{figure}[!h]
\begin{center}
\begin{tikzpicture}[scale=0.9]
\draw (-2,0) arc [radius=2, start angle=180, end angle=360];
\begin{scope}[yscale=0.7]
        \draw (-2,0) to [out=45, in=180] (-1.6,0.25);
        \draw (-1.6,0.25) to [out=0, in=135] (-1.2,0);
        \draw (-1.2,0) to [out=315, in=180] (-0.8,-0.25);
        \draw (-0.8,-0.25) to [out=0, in=45] (-0.4,0);
        \draw (-0.4,0) to [out=45, in=180] (0,0.25);
        \draw (0,0.25) to [out=0, in=135] (0.4,0);
        \draw (0.4,0) to [out=315, in=180] (0.8,-0.25);
        \draw (0.8,-0.25) to [out=0, in=45] (1.2,0);
        \draw (1.2,0) to [out=45, in=180] (1.6,0.25);
        \draw (1.6,0.25) to [out=0, in=135] (2,0);
    \end{scope}
\draw [fill=black] (0,2) circle (2.0pt);
\draw [fill=black] (0,-2) circle (2.0pt);
\draw [fill=black] (-1.5,-0.7) circle (1.3pt);
\draw (-2,0) -- (0,2);
\draw (2,0) -- (0,2);
\node [below] at (0.18,-0.5) {$\mathcal{P}_{\rm fin}(\omega)$};
\node [below] at (0,-2) {$\varnothing$};
\node [right] at (-1.5,-0.6) {$F$};
\node [right] at (0,2.2) {$Z_2$};
\end{tikzpicture}
\end{center}
\caption{The lattice $L_2$.}
\label{fig:L2}
\end{figure}
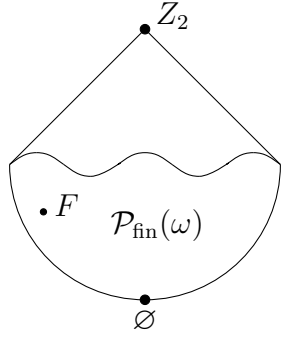

Clopen upsets of $Z_3$ are the whole space, the empty set, finite subsets of $\{y\}\cup\{z_n\mid n\in\omega \}$, and
$\{x,y \} \cup C$ where $C$ is a cofinite subset of $\{z_n\mid n\in\omega \}$.
Therefore, if we denote by ${\bf CF}(\omega)$ the Boolean algebra of finite and cofinite subsets of $\omega$ and by $\bf 2$
the two-element Boolean algebra, then $L_3$ is isomorphic to the sublattice of ${\bf CF}(\omega) \times {\bf 2}$ given by
the elements of the form $(A,n)$ where $A$ is finite or $n=1$; see Figure~\ref{fig:L3}. Thus, $L_3$ is not a Heyting algebra
because $\nega (F,1)$ does not exist for any finite $F$.

\begin{figure}[!h]
\begin{center}
\hspace*{-0.7cm}
\begin{tikzpicture}[scale=0.9]
\begin{scope}[shift={(-3,-2.5)}]
    \draw (-2,0) arc [radius=2, start angle=180, end angle=360];
    \begin{scope}[yscale=0.7]
           \draw (-2,0) to [out=45, in=180] (-1.6,0.25);
            \draw (-1.6,0.25) to [out=0, in=135] (-1.2,0);
            \draw (-1.2,0) to [out=315, in=180] (-0.8,-0.25);
            \draw (-0.8,-0.25) to [out=0, in=45] (-0.4,0);
            \draw (-0.4,0) to [out=45, in=180] (0,0.25);
            \draw (0,0.25) to [out=0, in=135] (0.4,0);
            \draw (0.4,0) to [out=315, in=180] (0.8,-0.25);
            \draw (0.8,-0.25) to [out=0, in=45] (1.2,0);
            \draw (1.2,0) to [out=45, in=180] (1.6,0.25);
            \draw (1.6,0.25) to [out=0, in=135] (2,0);
        \end{scope}
    \node [below] at (0,-0.5) {$\mathcal{P}_{\rm fin}(\omega) \times \{ 0 \}$};
\end{scope}
\begin{scope}[shift={(3,0)}]
    \draw (-2,0) arc [radius=2, start angle=180, end angle=360];
    \begin{scope}[yscale=0.7]
           \draw (-2,0) to [out=45, in=180] (-1.6,0.25);
            \draw (-1.6,0.25) to [out=0, in=135] (-1.2,0);
            \draw (-1.2,0) to [out=315, in=180] (-0.8,-0.25);
            \draw (-0.8,-0.25) to [out=0, in=45] (-0.4,0);
            \draw (-0.4,0) to [out=45, in=180] (0,0.25);
            \draw (0,0.25) to [out=0, in=135] (0.4,0);
            \draw (0.4,0) to [out=315, in=180] (0.8,-0.25);
            \draw (0.8,-0.25) to [out=0, in=45] (1.2,0);
            \draw (1.2,0) to [out=45, in=180] (1.6,0.25);
            \draw (1.6,0.25) to [out=0, in=135] (2,0);
            \draw [fill=black] (-1.75,-0.8) circle (1.5pt);
            \node [below] at (-2.5,-0.5) {$(F,1)$};
        \end{scope}
    \node [below] at (0,-0.5) {$\mathcal{P}_{\rm fin}(\omega) \times \{ 1 \}$};
\end{scope}
\begin{scope}[shift={(3,2)}]
    \begin{scope}[yscale=-1]
        \draw (-2,0) arc [radius=2, start angle=180, end angle=360];
        \begin{scope}[yscale=0.7]
               \draw (-2,0) to [out=45, in=180] (-1.6,0.25);
                \draw (-1.6,0.25) to [out=0, in=135] (-1.2,0);
                \draw (-1.2,0) to [out=315, in=180] (-0.8,-0.25);
                \draw (-0.8,-0.25) to [out=0, in=45] (-0.4,0);
                \draw (-0.4,0) to [out=45, in=180] (0,0.25);
                \draw (0,0.25) to [out=0, in=135] (0.4,0);
                \draw (0.4,0) to [out=315, in=180] (0.8,-0.25);
                \draw (0.8,-0.25) to [out=0, in=45] (1.2,0);
                \draw (1.2,0) to [out=45, in=180] (1.6,0.25);
                \draw (1.6,0.25) to [out=0, in=135] (2,0);
            \end{scope}
        \node [below] at (0,-1.2) {$\mathcal{P}_{\rm cofin}(\omega) \times \{ 1 \}$};
    \end{scope}
\end{scope}
\draw (-4.8,-2.36) -- (1,0);
\draw (-2.12,-4.3) -- (3.83,-1.82);
\draw (1,0) -- (1,2);
\draw (5,0) -- (5,2);
\end{tikzpicture}
\end{center}
\caption{The lattice $L_3$.}
\label{fig:L3}
\end{figure}
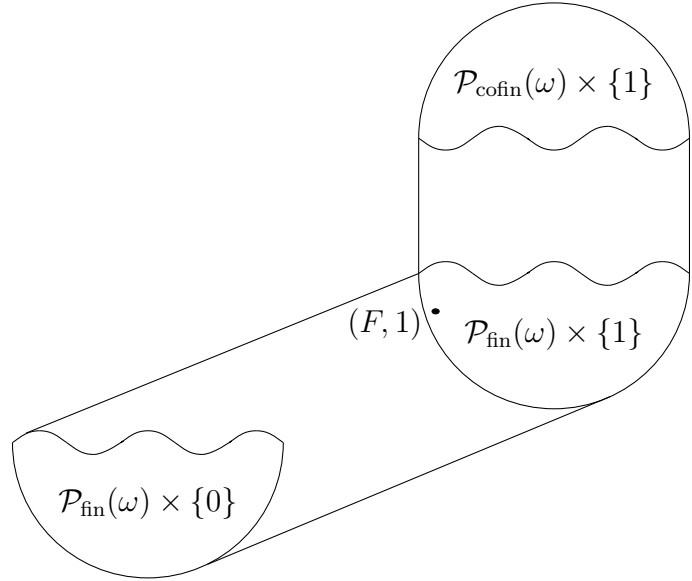

\begin{definition}
Let $L\in{\sf Dist}$ and let $a,b \in L$. Define
\[
I_{a \to b} := \{ c \in L \mid c \meet a \leq b \}
\]
\end{definition}

It is easy to check that $I_{a \to b}$ is an ideal, and that $I_{a \to b}$ is principal iff $a \impla b$ exists in $L$,
in which case $I_{a \to b} = {\downarrow} (a \impla b)$.

In order to give the dual description of $I_{a \to b}$ 
let $X$ be the Priestley space of $L$ and let $\alpha$ be the isomorphism from $L$ onto the lattice of clopen upsets of $X$ 
(see the introduction). It is well known that 
ideals of $L$ correspond to open upsets of $X$, and this correspondence is realized by sending an ideal $I$ of $L$ to
$\alpha[I]:= \bigcup \{ \alpha(a) \mid a \in I \}$. 
On the other hand,
filters of $L$ correspond to closed upsets of $X$, and this correspondence
is realized by sending a filter $F$ of $L$ to $\alpha[F]:= \bigcap \{ \alpha(a) \mid a \in F \}$.

\begin{lemma} \label{lemma:iatobcorrespond}
Let $L\in{\sf Dist}$ and let $X$ be its dual Priestley space.
If $a,b \in L$, then $\alpha[I_{a \to b}]=X \setminus \darr (\alpha(a) \setminus \alpha(b))$.
\end{lemma}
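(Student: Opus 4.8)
The statement to prove is that $\alpha[I_{a\to b}] = X\setminus{\darr}(\alpha(a)\setminus\alpha(b))$. Let me write $A=\alpha(a)$, $B=\alpha(b)$, both clopen upsets of $X$. The plan is to establish the set equality by a double inclusion argument, exploiting the dictionary between ideals/filters of $L$ and open upsets/closed upsets of $X$ that was just recalled, together with the Priestley separation axiom.

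First I would unwind the left-hand side. By definition, $\alpha[I_{a\to b}]=\bigcup\{\alpha(c)\mid c\in L,\ c\wedge a\le b\}=\bigcup\{\alpha(c)\mid \alpha(c)\cap A\subseteq B\}$. So a point $x$ lies in $\alpha[I_{a\to b}]$ iff there is a clopen upset $C$ of $X$ with $x\in C$ and $C\cap A\subseteq B$. For the right-hand side, note that $A\setminus B$ is clopen (being the intersection of the clopen $A$ with the clopen $B^c$), so ${\darr}(A\setminus B)$ is closed; hence $X\setminus{\darr}(A\setminus B)$ is open, and one checks it is an upset since ${\darr}(A\setminus B)$ is a downset. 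Thus both sides are open upsets, which is the expected consistency check.

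For the inclusion $\alpha[I_{a\to b}]\subseteq X\setminus{\darr}(A\setminus B)$: suppose $x\in C$ for some clopen upset $C$ with $C\cap A\subseteq B$. If $x\in{\darr}(A\setminus B)$ then $x\le z$ for some $z\in A\setminus B$; since $C$ is an upset and $x\in C$, we get $z\in C\cap A\subseteq B$, contradicting $z\notin B$. For the reverse inclusion $X\setminus{\darr}(A\setminus B)\subseteq\alpha[I_{a\to b}]$: take $x\notin{\darr}(A\setminus B)$, i.e., $x\not\le z$ for every $z\in A\setminus B$. This is exactly where I would invoke compactness and the Priestley separation axiom. For each $z\in A\setminus B$, since $x\not\le z$ there is a clopen upset $U_z$ with $x\in U_z$ and $z\notin U_z$, equivalently $z\in U_z^c$, which is a clopen downset. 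The family $\{U_z^c\mid z\in A\setminus B\}$ covers the closed (hence compact) set $A\setminus B$, so finitely many $U_{z_1}^c,\dots,U_{z_n}^c$ cover it. Put $C=U_{z_1}\cap\cdots\cap U_{z_n}$, a clopen upset containing $x$; then $C\cap(A\setminus B)=\varnothing$, i.e. $C\cap A\subseteq B$. Hence $c:=\alpha^{-1}(C)$ satisfies $c\wedge a\le b$, so $c\in I_{a\to b}$ and $x\in\alpha(c)\subseteq\alpha[I_{a\to b}]$.

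The main obstacle is the reverse inclusion, and specifically the compactness-plus-separation argument that produces a single clopen upset $C$ separating $x$ from all of $A\setminus B$ at once; the forward inclusion and the bookkeeping that both sides are open upsets are routine. One should be slightly careful that $A\setminus B$ may be empty, in which case ${\darr}(A\setminus B)=\varnothing$, the right-hand side is all of $X$, and $C=X$ (i.e. $c=1\wedge a = a \le b$ fails in general) — actually here one simply notes $0\in I_{a\to b}$ is not enough, but $A\setminus B=\varnothing$ means $A\subseteq B$, so $1\wedge a=a\le b$, i.e. $1\in I_{a\to b}$ and $\alpha(1)=X$; so the degenerate case is fine. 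I would mention this briefly or absorb it into the empty-cover convention.
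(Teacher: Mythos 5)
Your proposal is correct and follows essentially the same route as the paper's proof: your forward inclusion is the paper's order-theoretic equivalence $c\wedge a\le b$ iff $\alpha(c)\subseteq X\setminus\darr(\alpha(a)\setminus\alpha(b))$, and your compactness-plus-Priestley-separation argument for the reverse inclusion is precisely the standard fact (an open upset of a Priestley space is the union of the clopen upsets it contains) that the paper absorbs into the recalled correspondence between ideals and open upsets. So the only difference is that you inline a proof of that standard fact rather than citing it; no gap.
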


\begin{proof}
For any $c \in L$ we have
\begin{equation*}
\begin{split}
c \in I_{a \to b} \Leftrightarrow c \meet a \leq b & \Leftrightarrow \alpha(c) \cap \alpha(a) \subseteq \alpha(b) \\
& \Leftrightarrow \alpha(c) \subseteq X \setminus \darr (\alpha(a) \setminus \alpha(b))
\end{split}
\end{equation*}
where the last equivalence follows from the fact that for any upsets $U,V,W$ we have $W \cap U \subseteq V$ iff
$W \subseteq X \setminus {\downarrow} (U \setminus V)$. Thus, $\alpha[I_{a \to b}]=X \setminus \darr (\alpha(a) \setminus \alpha(b))$.
\end{proof}

It is a well-known consequence of Stone duality for Boolean algebras that a Boolean algebra is countable iff its Stone space is
metrizable (see, e.g., \cite[Prop.~7.23]{kop89}). This fact generalizes to bounded distributive lattices and Priestley spaces
(see, e.g., \cite[p.~54]{pri84}). To see this, let $L$ be a bounded distributive lattice and $X$ its Priestley space. The Boolean
algebra of clopens of $X$ is isomorphic to
the free Boolean extension $B(L)$ of $L$; see, e.g., \cite[Sec.~V.4]{bd11}. Thus, the following three conditions are equivalent:
\begin{itemize}
\item $X$ is metrizable;
\item $L$ is countable;
\item $B(L)$ is countable.
\end{itemize}

\begin{theorem}\label{cHA}
Let $L$ be a countable bounded distributive lattice. Then $L$ is not a Heyting algebra iff one of $L_i$ $(i=1,2,3)$ is a homomorphic
image of $L$ such that the homomorphism $h_i \colon L \to L_i$ satisfies the following property: There are $a,b \in L$ such that
$h_i[I_{a \to b}]=I_{c_i \to 0}$, where $c_1=c$, $c_2=\{ 0 \}$, or $c_3= (\varnothing,1)$.
\end{theorem}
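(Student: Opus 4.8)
The plan is to read Theorem~\ref{cHA} off Theorem~\ref{thm:maintheorem} via Priestley--Esakia duality. Let $X$ be the dual Priestley space of $L$ and $\alpha$ the isomorphism of $L$ onto the lattice of clopen upsets of $X$; since $L$ is countable, $X$ is metrizable, and since $\alpha$ identifies $L$ with the clopen upsets of $X$, the lattice $L$ is a Heyting algebra iff $X$ is an Esakia space. So it suffices to translate the condition ``some $Z_i$ is a forbidden configuration for $X$'' into the displayed algebraic one, and for this I would use two standard pieces of the duality dictionary. First, a surjective bounded lattice homomorphism $h\colon L\to L_i$ corresponds to a topological and order embedding $e\colon Z_i\to X$ (with closed image, as $Z_i$ is compact and $X$ Hausdorff), and, writing $\alpha_i$ for the isomorphism of $L_i$ onto the clopen upsets of $Z_i$, this correspondence satisfies $\alpha_i(h(a))=e^{-1}(\alpha(a))$ for all $a\in L$. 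Second, for an ideal $J$ of $L$ the image $h[J]$ is an ideal of $L_i$ with $\alpha_i[h[J]]=\bigcup_{a\in J}\alpha_i(h(a))=\bigcup_{a\in J}e^{-1}(\alpha(a))=e^{-1}(\alpha[J])$; combining this with Lemma~\ref{lemma:iatobcorrespond} shows that $h[I_{a\to b}]$ corresponds to the open upset $e^{-1}\bigl(X\setminus\darr(\alpha(a)\setminus\alpha(b))\bigr)=Z_i\setminus e^{-1}\bigl(\darr(\alpha(a)\setminus\alpha(b))\bigr)$ of $Z_i$. Finally I would record that in each of $L_1,L_2,L_3$ the designated element $c_i$ is exactly the one corresponding under $\alpha_i$ to the clopen upset $\{y\}$ of $Z_i$ (see Figures~\ref{fig:L1}--\ref{fig:L3}), so by Lemma~\ref{lemma:iatobcorrespond} again $I_{c_i\to 0}$ corresponds to the open upset $Z_i\setminus\darr\{y\}=Z_i\setminus\{x,y\}$.

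For the forward implication I would assume $L$ is not a Heyting algebra, so $X$ is not an Esakia space, and invoke Theorem~\ref{thm:maintheorem} to obtain an embedding $e\colon Z_i\to X$ and an open neighborhood $U_0$ of $e(y)$ with $e^{-1}(\darr U_0)=\{x,y\}$. Let $h_i\colon L\to L_i$ be the surjection dual to $e$. Since the sets $\alpha(a)\setminus\alpha(b)$ form a basis of $X$, choose $a,b\in L$ with $e(y)\in\alpha(a)\setminus\alpha(b)\subseteq U_0$ and set $W=\alpha(a)\setminus\alpha(b)$. Then $y\in e^{-1}(\darr W)$, also $x\in e^{-1}(\darr W)$ because $e(x)\le e(y)\in W$, and $\darr W\subseteq\darr U_0$ gives $e^{-1}(\darr W)\subseteq\{x,y\}$, so $e^{-1}(\darr W)=\{x,y\}$. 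By the dictionary facts, $h_i[I_{a\to b}]$ and $I_{c_i\to 0}$ then correspond to the same open upset $Z_i\setminus\{x,y\}$ of $Z_i$, hence are equal.

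For the converse I would take a surjection $h_i\colon L\to L_i$ and $a,b\in L$ with $h_i[I_{a\to b}]=I_{c_i\to 0}$, let $e\colon Z_i\to X$ be the dual embedding, and run the same computation backwards: the equality of ideals forces $e^{-1}\bigl(\darr(\alpha(a)\setminus\alpha(b))\bigr)=\{x,y\}$, which is not open in $Z_i$; as $e$ is continuous, $\darr(\alpha(a)\setminus\alpha(b))$ is not open in $X$, so $X$ is not an Esakia space and $L$ is not a Heyting algebra. The real mathematical content is already in Theorem~\ref{thm:maintheorem} and Lemma~\ref{lemma:iatobcorrespond}; I expect the only points needing care to be bookkeeping: matching the neighborhood $U$ in the definition of a forbidden configuration with the basic clopen $\alpha(a)\setminus\alpha(b)$ the algebra supplies — handled in the forward direction by shrinking $U_0$ inside the canonical basis, and in the converse by noting that one never needs to recover a neighborhood of $e(y)$, only the non-openness of the downset — and checking that $c_1,c_2,c_3$ really are the copies of $\{y\}$ inside $L_1,L_2,L_3$ under the concrete presentations of these lattices; once the latter is verified, the three cases are treated uniformly.
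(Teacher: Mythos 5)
Your argument is correct, and your forward direction is essentially the paper's: invoke Theorem~\ref{thm:maintheorem}, shrink the neighborhood $U$ to a basic clopen $\alpha(a)\setminus\alpha(b)$, check $e^{-1}\bigl(\darr(\alpha(a)\setminus\alpha(b))\bigr)=\{x,y\}$, and transfer through Lemma~\ref{lemma:iatobcorrespond} together with the identification $\alpha_i(c_i)=\{y\}$ and the fact that the dual of the embedding $e$ is an onto homomorphism $h_i=e^{-1}$. Where you genuinely diverge is the converse. The paper stays entirely on the algebraic side: if $a\impla b$ existed, then $I_{a\to b}={\downarrow}(a\impla b)$ would be principal, so its image under the onto homomorphism $h_i$ would be the principal ideal ${\downarrow}h_i(a\impla b)$; but that image is $I_{c_i\to 0}$, which would make $c_i\impla 0$ exist in $L_i$, contradicting the observations made when describing $L_1,L_2,L_3$ (namely that $\nega c$, $\nega\{0\}$, $\nega(\varnothing,1)$ do not exist). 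You instead dualize the surjection to an embedding $e\colon Z_i\to X$, show the ideal equality forces $e^{-1}\bigl(\darr(\alpha(a)\setminus\alpha(b))\bigr)=\{x,y\}$, which is not open in $Z_i$, and conclude by continuity that the downset of the clopen $\alpha(a)\setminus\alpha(b)$ is not open, so $X$ is not an Esakia space. Both converses are valid and neither needs countability; the paper's is shorter and more self-contained (no duality for morphisms, no identification of $c_i$ with $\{y\}$ needed in that direction, and it directly exhibits which implication $a\impla b$ fails to exist), while yours is uniform with the forward direction, using the same dictionary run in both directions, at the cost of extra duality bookkeeping (dual of an onto homomorphism is an embedding, the ideal/open-upset correspondence, and non-openness of $\{x,y\}$ in $Z_i$).
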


\begin{proof}
$(\Rightarrow)$ It is sufficient to translate Theorem~\ref{thm:maintheorem} to its dual algebraic statement.
Let $X$ be the Priestley space of $L$. Then $X$ is a metrizable Priestley space which is not an Esakia space. Thus, by
Theorem~\ref{thm:maintheorem}, $Z_i$ is a forbidden configuration for $X$ for some $i=1,2,3$. Let $e,U$ be as in
Definition~\ref{def:forbidconf}. Then there are $a,b \in L$ such that $e(y) \in \alpha(a)\setminus \alpha(b) \subseteq U$.
Therefore, $e^{-1}{\downarrow}(\alpha(a) \setminus \alpha(b))\subseteq e^{-1}{\downarrow}U=\{x,y\}$. On the other hand, since
$e$ is order-preserving and $e(y)\in \alpha(a) \setminus \alpha(b)$, we have
$\{e(x),e(y)\}\subseteq {\downarrow}(\alpha(a) \setminus \alpha(b))$, so
$\{x,y\}\subseteq e^{-1}{\downarrow}(\alpha(a) \setminus \alpha(b))$. Thus,
$e^{-1}{\downarrow}(\alpha(a) \setminus \alpha(b))=\{x,y\}={\downarrow}y$.
We also have that $\alpha(c_i)= \{ y \} \subseteq Z_i$. By Lemma~\ref{lemma:iatobcorrespond},
$\alpha[I_{a \to b}]=X\setminus{\downarrow}(\alpha(a)\setminus \alpha(b))$ and $\alpha[I_{c_i \to 0}]=Z_i\setminus{\downarrow}y$.
Let $h_i \colon L\to L_i$ be the bounded lattice homomorphism corresponding to the embedding $e \colon Z_i\to X$, so $h_i=e^{-1}$.
Since $e$ is an embedding, $h_i$ is onto \cite{pri70}. Therefore, since
\[
e^{-1}(X\setminus{\downarrow}(\alpha(a) \setminus \alpha(b)))=Z_i\setminus e^{-1}{\downarrow}(\alpha(a) \setminus \alpha(b))
=Z_i\setminus{\downarrow}y,
\]
we conclude that $h_i[I_{a \to b}]=I_{c_i \to 0}$.

$(\Leftarrow)$
We show that $a \impla b$ does not exist in $L$. If $a \impla b$ exists, then we have $I_{a \to b} = {\downarrow} (a \impla b)$.
Since $h_i$ is an onto lattice homomorphism,
\[
I_{c_i \to 0}=h_i[I_{a \to b}]=h_i[{\downarrow} (a \impla b)]={\downarrow} h_i(a \impla b).
\]
Therefore, $c_i \impla 0=h_i(a \impla b)$, and hence $c_i \impla 0$ exists in $L_i$. The obtained contradiction proves that
$a \impla b$ does not exist in $L$, and hence $L$ is not a Heyting algebra.
\end{proof}

Theorem~\ref{cHA} yields a characterization of countable Heyting algebras. We conclude this section by showing that this
characterization easily generalizes to countable p-algebras. We recall (see, e.g., \cite{lee70}) that a \emph{p-algebra}
is a pseudocomplemented distributive lattice. Priestley duality for p-algebras was developed in \cite{pri75}. We call
a Priestley space $X$ a \emph{p-space} provided the downset of each open upset is open. Then a bounded distributive
lattice $L$ is a p-algebra iff its dual Priestley space $X$ is a p-space.

\begin{definition}
Let $X$ be a Priestley space. We say that $Z_i$ ($i=1,2,3$) is a \emph{p-configuration} for $X$ if $Z_i$ is a forbidden
configuration for $X$ and in addition the open neighborhood $U$ of $e(y)$ is an upset.
\end{definition}

We point out that neither of the bounded distributive lattices $L_1$, $L_2$, $L_3$ that are dual to $Z_1$, $Z_2$, $Z_3$
is a p-algebra. The next result is a direct generalization of Theorems~\ref{thm:maintheorem} and~\ref{cHA}, so we skip
its proof.

\begin{corollary} \label{thm:corollpseudocompl}
Let $L$ be a countable bounded distributive lattice, and let $X$ be its Priestley space, which is then a metrizable space.
\begin{enumerate}
\item $X$ is not a p-space iff one of $Z_1, Z_2, Z_3$ is a p-configuration for $X$.
\item $L$ is not a p-algebra iff one of $L_i$ $(i=1,2,3)$ is a homomorphic image of $L$ such that the homomorphism
$h_i \colon L \to L_i$ satisfies the following property: There is $a\in L$ such that $h_i[I_{a \to 0}]=I_{c_i \to 0}$, where
$c_1=c$, $c_2=\{ 0 \}$, or $c_3 = (\varnothing,1)$.
\end{enumerate}
\end{corollary}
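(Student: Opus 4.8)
\textbf{Proof proposal for Corollary~\ref{thm:corollpseudocompl}.}

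The plan is to mirror the proofs of Theorems~\ref{thm:maintheorem} and~\ref{cHA}, tracking the extra ``upset'' condition throughout. For part~(1), the forward direction is immediate: if $Z_i$ is a p-configuration for $X$, then in particular it is a forbidden configuration, so by the argument in Theorem~\ref{thm:maintheorem} the set $\darr U$ is not open; but now $U$ is an open \emph{upset}, so this shows $X$ is not a p-space. For the converse, suppose $X$ is not a p-space. Then there is an open upset $U$ whose downset $\darr U$ is not open. Now I would run the \emph{entire} argument of Theorem~\ref{thm:maintheorem} verbatim: pick a sequence $\xn \subseteq (\darr U)^c$ with $\lim x_n = x \in \darr U \setminus U$, choose $y \in U$ with $x < y$, pass to a subsequence so that each $x_n$ is incomparable with $y$, and then split into Cases 1, 2a, 2b to extract a copy $Z$ of $Z_1$, $Z_2$, or $Z_3$ with the embedding $e$ and neighborhood $U$ (one can take the witnessing neighborhood of $e(y)$ to be $U$ itself, since $Z \cap U = \{y\}$). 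The only thing to add is the observation that $U$ is an upset by hypothesis, so the embedding we produced is a p-configuration, not merely a forbidden configuration. I expect \emph{no} new obstacle here: the proof of Theorem~\ref{thm:maintheorem} never used that $U$ fails to be an upset, so nothing breaks when we additionally assume it is one.

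For part~(2), I would translate part~(1) into its dual statement exactly as Theorem~\ref{cHA} was derived from Theorem~\ref{thm:maintheorem}, using Lemma~\ref{lemma:iatobcorrespond} in the special case $b = 0$. Recall that for $L \in {\sf Dist}$ the pseudocomplement $\nega a$ exists iff $a \impla 0$ exists, and $I_{a\to 0} = \{c \mid c \meet a = 0\}$ is the relevant ideal; on the dual side $\alpha[I_{a\to 0}] = X \setminus \darr \alpha(a)$ by Lemma~\ref{lemma:iatobcorrespond}. For the forward direction of~(2): if $L$ is not a p-algebra, then $X$ is not a p-space, so by part~(1) some $Z_i$ is a p-configuration; let $e, U$ be as in the definition. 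Since clopen upsets form a basis for the open upsets of $X$, and $U$ is an open upset containing $e(y)$, there is $a \in L$ with $e(y) \in \alpha(a) \subseteq U$. As in the proof of Theorem~\ref{cHA}, order-preservation of $e$ together with $e(y) \in \alpha(a)$ gives $\{x,y\} \subseteq e^{-1}\darr\alpha(a) \subseteq e^{-1}\darr U = \{x,y\}$, hence $e^{-1}\darr\alpha(a) = \darr y$. Setting $h_i = e^{-1} \colon L \to L_i$ (onto, since $e$ is an embedding) and using $\alpha(c_i) = \{y\}$ inside $Z_i$, we get $h_i[I_{a\to 0}] = e^{-1}(X \setminus \darr\alpha(a)) = Z_i \setminus \darr y = \alpha[I_{c_i\to 0}] = I_{c_i\to 0}$. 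Conversely, if such an $h_i$ and $a$ exist and $\nega a$ (i.e.\ $a \impla 0$) existed in $L$, then $I_{a\to 0} = \darr(a\impla 0)$, so $I_{c_i\to 0} = h_i[\darr(a\impla 0)] = \darr h_i(a\impla 0)$ is principal, forcing $c_i \impla 0$ to exist in $L_i$ --- contradicting the fact (noted just before the corollary, and visible from Figures~\ref{fig:L1}--\ref{fig:L3}) that none of $L_1, L_2, L_3$ is a p-algebra, since $\nega c$, $\nega\{0\}$, $\nega(\varnothing,1)$ respectively fail to exist.

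The one point that deserves a sentence of care, and which I regard as the only mild obstacle, is the compatibility between the ``open upset'' in the topological statement~(1) and the ``ideal of the form $I_{a\to 0}$'' in the algebraic statement~(2): specifically, that we may replace the arbitrary open upset $U$ by a clopen upset $\alpha(a)$ while preserving the property $e^{-1}\darr(\,\cdot\,) = \darr y$. This works because $\darr(\cdot)$ is monotone and $\alpha(a) \subseteq U$, so $e^{-1}\darr\alpha(a) \subseteq e^{-1}\darr U = \{x,y\}$, while the reverse inclusion comes for free from $e(y) \in \alpha(a)$ and order-preservation --- exactly the squeeze used in Theorem~\ref{cHA}. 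Everything else is a routine specialization of the already-proved results, which is why we state the corollary without a full proof.
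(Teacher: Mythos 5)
Your proposal is correct and is precisely the argument the paper has in mind: the paper explicitly skips the proof, noting the corollary is a direct generalization of Theorems~\ref{thm:maintheorem} and~\ref{cHA}, and your write-up just reruns those proofs while tracking that $U$ is an upset (using that clopen upsets form a basis for open upsets to get $\alpha(a)\subseteq U$, i.e.\ the case $b=0$). No gaps; this matches the intended proof.
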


We recall that \emph{co-Heyting algebras} are order-duals of Heyting algebras.
The Priestley spaces dual to co-Heyting algebras are the ones with the property that the upset of each open is open
\cite{esa75}. Let $Z_1^*,Z_2^*,Z_3^*$ be the Priestley spaces obtained by reversing the order in $Z_1,Z_2,Z_3$, respectively.
Then dualizing Theorem~\ref{thm:maintheorem} yields:

\begin{corollary}
A metrizable Priestley space $X$ is not the dual of a co-Heyting algebra iff there are a topological and order embedding $e$ from
one of $Z_1^*, Z_2^*, Z_3^*$ into $X$ and an open neighborhood $U$ of $e(y)$ such that $e^{-1}({\uparrow}U)=\{x,y\}$.
\end{corollary}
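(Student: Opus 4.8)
The plan is to deduce this corollary by order-duality from Theorem~\ref{thm:maintheorem}, rather than reproving it from scratch. The key observation is that the order-reversing bijection from Priestley spaces to Priestley spaces (keeping the same topology, replacing $\le$ by its converse $\ge$) sends Esakia spaces to the Priestley spaces dual to co-Heyting algebras: indeed, $\darr$ in $(X,\ge)$ is exactly $\uarr$ in $(X,\le)$, so ``the downset of each open is open'' in the reversed order is literally ``the upset of each open is open'' in the original order, which by \cite{esa75} characterizes duals of co-Heyting algebras. Also, reversing the order commutes with forming subspaces and preserves the homeomorphism type, so $Z_i^* $ is precisely $Z_i$ with its order reversed, and a topological-and-order embedding $e\colon Z_i\to X$ in the reversed orders is the same thing as a topological embedding $e\colon Z_i^*\to X$ that is order-preserving for the original orders.

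Concretely, here is how I would carry it out. Let $X$ be a metrizable Priestley space, and let $X^*$ denote the same space with the reversed order; note $X^*$ is again a metrizable Priestley space. First I would record that $X$ is the dual of a co-Heyting algebra iff $X^*$ is an Esakia space — this is the combination of \cite{esa75} with the elementary identity $\darr_{X^*}A=\uarr_X A$. Then I would apply Theorem~\ref{thm:maintheorem} to $X^*$: $X^*$ is not an Esakia space iff some $Z_i$ is a forbidden configuration for $X^*$, i.e.\ iff there are a topological and order embedding $e\colon Z_i\to X^*$ and an open neighborhood $U$ of $e(y)$ with $e^{-1}(\darr_{X^*}U)=\{x,y\}$. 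Finally I would translate each ingredient back to the original order of $X$: an order embedding $Z_i\to X^*$ is exactly an order embedding $Z_i^*\to X$ (the roles of $x,y$ and of the $z_n$ are unchanged, since $x$ and $y$ are the only comparable pair and the reversal just swaps which is the bottom), the topology is untouched so $U$ is still an open neighborhood of $e(y)$ in $X$, and $e^{-1}(\darr_{X^*}U)=e^{-1}(\uarr_X U)$. Chaining these equivalences gives exactly the statement of the corollary.

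I do not anticipate a genuine obstacle here; the only point requiring a little care is the bookkeeping that ``$Z_i$ is a forbidden configuration for $X^*$'' really does match ``there is a topological and order embedding $e\colon Z_i^*\to X$ with an open $U\ni e(y)$ such that $e^{-1}(\uarr U)=\{x,y\}$'' on the nose — in particular that the distinguished points named $x$ and $y$ in the definition of forbidden configuration behave correctly under order reversal. Since in each $Z_i$ the pair $\{x,y\}$ is the unique nontrivially comparable pair and $\darr y=\{x,y\}$, after reversal $\uarr y=\{x,y\}$, so $Z_i^*$ with the same labels is still of the required form with $x$ the limit point of $\{z_n\}$ and $y$ the extra point; the condition $e^{-1}(\uarr U)=\{x,y\}={\uparrow}y$ in $Z_i^*$ is then the faithful mirror of $e^{-1}(\darr U)=\{x,y\}={\downarrow}y$ in $Z_i$. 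Once this correspondence is spelled out, the corollary is immediate, which is why no proof need be written out in detail.
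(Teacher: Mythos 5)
Your proposal is correct and follows exactly the route the paper intends: the paper states this corollary with no written proof, remarking only that it is obtained by ``dualizing Theorem~\ref{thm:maintheorem}'', which is precisely the order-reversal argument you spell out (including the correct bookkeeping that reversing the order turns $\darr y=\{x,y\}$ in $Z_i$ into $\uarr y=\{x,y\}$ in $Z_i^*$ and turns the Esakia condition into the co-Heyting dual condition of \cite{esa75}). Your write-up simply makes explicit the details the paper leaves to the reader.
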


We recall that \emph{bi-Heyting algebras} are the lattices which are both Heyting algebras and co-Heyting algebras. Priestley spaces
dual to bi-Heyting algebras are the ones in which the upset and downset of each open is open. Putting together the results for Heyting
algebras and co-Heyting algebras yields:

\begin{corollary}
A metrizable Priestley space $X$ is not dual to a bi-Heyting algebra iff one of $Z_1, Z_2, Z_3$ is a forbidden configuration for $X$ or
there are a topological and order embedding $e$ from one of $Z_1^*, Z_2^*, Z_3^*$ into $X$ and an open neighborhood $U$ of $e(y)$ such that
$e^{-1}({\uparrow}U)=\{x,y\}$.
\end{corollary}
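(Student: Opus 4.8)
The plan is to obtain this corollary by simply combining the two previously established characterizations: Theorem~\ref{thm:maintheorem} (or rather its contrapositive, the fact that a metrizable Priestley space $X$ is an Esakia space iff none of $Z_1,Z_2,Z_3$ is a forbidden configuration for $X$) and the immediately preceding corollary for co-Heyting algebras (a metrizable $X$ is dual to a co-Heyting algebra iff none of $Z_1^*,Z_2^*,Z_3^*$ embeds into $X$ with the stated $\uparrow\!U$-condition). The conceptual input is the observation, already recorded in the text just before the statement, that a bi-Heyting algebra is by definition a lattice that is simultaneously a Heyting algebra and a co-Heyting algebra, and dually that a Priestley space is dual to a bi-Heyting algebra iff the downset \emph{and} the upset of every open set is open — equivalently, iff it is both an Esakia space and the dual of a co-Heyting algebra.

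The key steps, in order, are as follows. First I would note that $L$ (the dual lattice of $X$) is a bi-Heyting algebra iff it is both a Heyting algebra and a co-Heyting algebra; on the dual side this says $X$ is dual to a bi-Heyting algebra iff $X$ is an Esakia space and $X$ is dual to a co-Heyting algebra. Second, negating: $X$ is \emph{not} dual to a bi-Heyting algebra iff $X$ is not an Esakia space \emph{or} $X$ is not dual to a co-Heyting algebra. Third, I would apply Theorem~\ref{thm:maintheorem} to rewrite the first disjunct as ``one of $Z_1,Z_2,Z_3$ is a forbidden configuration for $X$,'' and apply the preceding corollary to rewrite the second disjunct as ``there are a topological and order embedding $e$ from one of $Z_1^*,Z_2^*,Z_3^*$ into $X$ and an open neighborhood $U$ of $e(y)$ with $e^{-1}({\uparrow}U)=\{x,y\}$.'' Disjoining these two rewritten statements gives exactly the claimed equivalence.

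There is essentially no hard part here: the corollary is a formal consequence of two results already in hand together with the definitional unfolding of ``bi-Heyting.'' The only point that requires a word of care is the claim that being dual to a bi-Heyting algebra is literally the conjunction of being an Esakia space and being dual to a co-Heyting algebra; this follows because the lattice operation structure (relative pseudocomplement, and its order-dual) is determined pointwise by the two topological closure conditions on downsets and upsets of opens, and these conditions are independent of one another, so their conjunction is exactly the bi-Heyting condition. Given that, the proof is a one-line combination and can be stated as such, so I would simply write: ``This follows by combining Theorem~\ref{thm:maintheorem} with the previous corollary, since a Priestley space is dual to a bi-Heyting algebra iff it is both an Esakia space and dual to a co-Heyting algebra.''
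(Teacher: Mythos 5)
Your proposal is correct and matches the paper's own (implicit) argument: the corollary is stated there as an immediate consequence of ``putting together'' Theorem~\ref{thm:maintheorem} and the co-Heyting corollary, using that a Priestley space is dual to a bi-Heyting algebra iff the downset and the upset of every open set are open, i.e.\ iff it is both an Esakia space and dual to a co-Heyting algebra. Nothing further is needed.
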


\section{Ideal-filter product and down-up sum}

\begin{definition}
Let $L$ and $M$ be bounded distributive lattices, $I$ an ideal of $L$, and $F$ a filter of $M$. We define the
\emph{ideal-filter product} of $L$ and $M$ as
\begin{equation*}
\lifm := \{ (l,m) \in L \times M \mid l \in I \text{ or } m \in F \}.
\end{equation*}
\end{definition}

\begin{lemma}
$\lifm$ is a bounded sublattice of $L \times M$.
\end{lemma}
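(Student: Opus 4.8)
The plan is to verify directly that $\lifm = \{(l,m) \mid l \in I \text{ or } m \in F\}$ is closed under the lattice operations of $L \times M$ and contains the bounds $(0_L, 0_M)$ and $(1_L, 1_M)$; since $L \times M$ is a bounded distributive lattice, this suffices. First I would check the bounds: $(0_L, 0_M) \in \lifm$ because $0_L \in I$ (any ideal contains the bottom element), and $(1_L, 1_M) \in \lifm$ because $1_M \in F$ (any filter contains the top element). So $\lifm$ is nonempty and contains both bounds of $L \times M$.

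Next I would handle closure under meet. Take $(l_1, m_1), (l_2, m_2) \in \lifm$; their meet in $L \times M$ is $(l_1 \wedge l_2, m_1 \wedge m_2)$. If $l_1 \in I$, then $l_1 \wedge l_2 \le l_1$ forces $l_1 \wedge l_2 \in I$ since $I$ is a downset, so the meet lies in $\lifm$; symmetrically if $l_2 \in I$. The remaining case is $l_1 \notin I$ and $l_2 \notin I$, which by membership in $\lifm$ forces $m_1 \in F$ and $m_2 \in F$; then $m_1 \wedge m_2 \in F$ because a filter is closed under meets, and again the meet lies in $\lifm$. Closure under join is the order-dual argument with the roles of $I$ and $F$ interchanged: the join is $(l_1 \vee l_2, m_1 \vee m_2)$; if $m_1 \in F$ then $m_1 \vee m_2 \ge m_1$ puts $m_1 \vee m_2 \in F$ since $F$ is an upset, and symmetrically for $m_2 \in F$; in the remaining case $m_1, m_2 \notin F$ forces $l_1, l_2 \in I$, whence $l_1 \vee l_2 \in I$ because an ideal is closed under joins.

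This covers all cases, so $\lifm$ is a bounded sublattice of $L \times M$. There is no real obstacle here — the argument is a routine case analysis — but the one point worth being careful about is the definition of "sublattice": one should note that the meet and join computed in $\lifm$ genuinely agree with those of $L \times M$ (which is immediate since we have shown $\lifm$ is closed under the ambient operations), and that "bounded" is meant in the sense that $\lifm$ contains the bounds of $L \times M$ rather than merely having bounds of its own.
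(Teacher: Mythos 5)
Your proof is correct and follows essentially the same routine verification as the paper: both check that $(0,0)$ and $(1,1)$ lie in $\lifm$ via $0\in I$ and $1\in F$, and both establish closure under meet and join using that $I$ is a downset closed under joins and $F$ is an upset closed under meets (you argue by direct case split, the paper by the contrapositive, which is the same argument). Nothing further is needed.
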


\begin{proof}
Clearly $(0,0) \in \lifm$ since $0 \in I$, and $(1,1) \in \lifm$ because $1 \in F$. Let $(l_1,m_1), (l_2,m_2) \in \lifm$.
If $m_1 \meet m_2 \notin F$, then
$m_1 \notin F$ or $m_2 \notin F$, so $l_1 \in I$ or $l_2 \in I$, implying that $l_1 \meet l_2 \in I$. Therefore,
$(l_1 \meet l_2,m_1 \meet m_2) \in \lifm$. If $l_1 \join l_2 \notin I$, then
$l_1 \notin I$ or $l_2 \notin I$, so $m_1 \in F$ or $m_2 \in F$, which implies that $m_1 \join m_2 \in F$. Thus,
$(l_1 \join l_2,m_1 \join m_2) \in \lifm$.
\end{proof}

\begin{figure}[!h]
\centering
\begin{tikzpicture}[scale=1.1]
\node [right] at (0.2,2) {$\lifm$};
\path [fill=lightgray] (-0.5,0.5) -- (0,1) -- (-0.5,1.5) -- (0,2) -- (1,1) -- (0,0);
\draw (0,0) -- (1,1) -- (0,2) -- (-1,1) -- (0,0);
\draw (-0.5,0.5) -- (0,1) -- (-0.5,1.5);
\draw (0.5,-0.5) -- (1,0);
\draw [thick] (1,0) -- (1.5,0.5);
\draw [rounded corners] (0.97,0.17) -- (0.97,-0.03) -- (1.17,-0.03);
\draw [thick] (-0.5,-0.5) -- (-1,0);
\draw (-1,0) -- (-1.5,0.5);
\draw [rounded corners] (-1.03,-0.17) -- (-1.03,0.03) -- (-0.83,0.03);
\node [left] at (-1.7,0.7) {$L$};
\node [left] at (-0.7,-0.5) {$I$};
\node [right] at (1.7,0.7) {$M$};
\node [right] at (1.3,0.2) {$F$};
\end{tikzpicture}
\caption{$\lifm$ as a sublattice of $L \times M$.}
\label{fig:lifm}
\end{figure}

In order to describe the Priestley space of $\lifm$, we recall (see, e.g., \cite[p.~17 and p.~269]{dp02}) the definition of linear sum
of two Priestley spaces.
Let $X,Y$ be Priestley spaces. For simplicity, we assume for the rest of this section that $X$ and $Y$ are disjoint. If they are not,
then as usual, we can simply replace $X$ with $X \times \{ 0 \}$ and $Y$ with $Y \times \{ 1\}$. Define the \emph{linear sum} $X\oplus Y$
as the disjoint union of $X$ and $Y$ with the topology of disjoint union and the order given by

\begin{eqnarray*}
x \leq y & \mbox{ iff } & (x,y \in X \mbox{ and } x \leq y) \mbox{ or } \\
& & (x,y \in Y \mbox{ and } x \leq y)\mbox{ or } \\
& & (x \in X \mbox{ and } y \in Y).
\end{eqnarray*}

Intuitively, we
place $X$ ``below" $Y$. We next modify the definition of the linear sum of $X$ and $Y$.

\begin{definition}
Let $X,Y$ be Priestley spaces, $D$ a closed downset of $X$, and $U$ a closed upset of $Y$. We define the \emph{down-up sum}
$\xduy$ of $X$ and $Y$ as their disjoint union with the topology of disjoint union and the order given by
\begin{eqnarray*}
x \leq y & \mbox{ iff } & (x,y \in X \mbox{ and } x \leq y) \mbox{ or } \\
& & (x,y \in Y \mbox{ and } x \leq y)\mbox{ or } \\
& & (x \in D \mbox{ and } y \in U).
\end{eqnarray*}
\end{definition}

Intuitively, instead of
placing $X$ ``below" $Y$, we are only
placing $D$ ``below" $U$ (see Figure~\ref{fig:xduy}).

\begin{figure}[!h]
\centering
\begin{tikzpicture}[scale=0.7]
\draw (5,3) -- (-0.5,3) -- (-0.5,1) -- (5,1) -- (5,3);
\draw (-5,-3) -- (0.5,-3) -- (0.5,-1) -- (-5,-1) -- (-5,-3);
\draw [dotted] (0.5,-1) -- (1.5,1);
\draw (1.5,1) -- (2.5,3);
\draw [dotted] (-0.5,1) -- (-1.5,-1);
\draw (-1.5,-1) -- (-2.5,-3);
\draw [dotted] (1.5,1) -- (-1.5,-1);
\draw [dotted] (-0.5,1) -- (0.5,-1);
\node at (-0.5,-2) {$D$};
\node at (0.5,2) {$U$};
\node at (-4,-0.7) {$X$};
\node at (4,0.7) {$Y$};
\end{tikzpicture}
\caption{The Priestley space $\xduy$.}
\label{fig:xduy}
\end{figure}

\begin{lemma}
$\xduy$ is a Priestley space.
\end{lemma}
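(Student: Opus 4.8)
The plan is to check, in turn, the three things that make $\xduy$ a Priestley space: that the relation $\le$ is a partial order, that the underlying space is compact, and that the Priestley separation axiom holds. The first two are routine. Reflexivity is immediate; antisymmetry holds because no element of $Y$ lies below an element of $X$, so a pair from distinct summands can satisfy at most one of $a\le b$, $b\le a$, while inside a single summand one quotes antisymmetry of $X$ or of $Y$. For transitivity one runs through the cases according to which summands contain $a$, $b$, $c$; the only cases that are not immediate are those in which the chain crosses from $X$ into $Y$, and there one uses that $D$ is a \emph{downset} of $X$ (so $a\le b$ in $X$ and $b\in D$ give $a\in D$) and that $U$ is an \emph{upset} of $Y$ (so $b\le c$ in $Y$ and $b\in U$ give $c\in U$). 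Compactness is clear, since $\xduy$ is the topological sum of the two compact spaces $X$ and $Y$, both of which are clopen in it.

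The real content is the Priestley separation axiom. First I would record the form of an arbitrary upset $W$ of $\xduy$: $W$ is an upset iff $W\cap X$ is an upset of $X$, $W\cap Y$ is an upset of $Y$, and $W$ contains all of $U$ whenever it meets $D$ (equivalently, $W\cap D=\varnothing$ or $U\subseteq W$). I would also recall the standard fact, valid in any Priestley space, that a closed downset $D$ and a point $a\notin D$ can be separated by a clopen upset: since $a\nleq d$ for every $d\in D$, Priestley separation gives clopen upsets through $a$ missing each $d$, and compactness of $D$ lets us intersect finitely many of them to obtain a clopen upset $V$ with $a\in V$ and $V\cap D=\varnothing$; the order-dual statement holds for closed upsets.

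Now, given $a\nleq b$ in $\xduy$, I would argue by cases on the summands. If $a,b\in Y$, a clopen upset of $Y$ separating them is already a clopen upset of $\xduy$, since it misses $D$. If $a,b\in X$, take a clopen upset $V_0$ of $X$ with $a\in V_0$ and $b\notin V_0$; then $V_0\cup Y$ is a clopen upset of $\xduy$ that contains $a$ but not $b$. If $a\in Y$ and $b\in X$, then $Y$ itself is a clopen upset of $\xduy$ that works. The one case that requires care is $a\in X$, $b\in Y$. If $a\notin D$, then $a$ lies below no point of $Y$, and the recalled fact gives a clopen upset $V$ of $X$ with $a\in V$ and $V\cap D=\varnothing$, which is a clopen upset of $\xduy$ avoiding $b$. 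If $a\in D$, then $a\nleq b$ forces $b\notin U$, and the order-dual fact applied inside $Y$ produces a clopen upset $W$ of $Y$ with $U\subseteq W$ and $b\notin W$; then $X\cup W$ is a clopen upset of $\xduy$ (the inclusion $U\subseteq W$ is precisely what makes it an upset) that contains $a$ and excludes $b$. I expect this last case to be the main obstacle, since there one must simultaneously keep $a$, reject $b$, and absorb the whole of $U$; it is also the only place where the closedness of $D$ and of $U$ is used, through the two compactness arguments.
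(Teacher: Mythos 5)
Your proof is correct and follows essentially the same route as the paper: the same case analysis on which summands contain the two points, with the only nontrivial case ($a\in X$, $b\in Y$) handled exactly as in the paper by separating $a$ from the closed downset $D$ when $a\notin D$, and by separating $b$ from the closed upset $U$ inside $Y$ (equivalently, taking the complement of a clopen downset containing $b$ and missing $U$) when $b\notin U$. The extra details you supply—the characterization of upsets of $\xduy$ and the compactness argument behind the point/closed-downset separation—are just explicit versions of standard facts the paper invokes implicitly.
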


\begin{proof}
Clearly $\xduy$ is compact. That $\leq$ is reflexive and antisymmetric is obvious, and that $\le$ is transitive follows from $D$ being
a downset of $X$ and $U$ an upset of $Y$. Let $x \nleq y$. First suppose that $x,y\in X$. Then there is a clopen upset $A$ of $X$ containing
$x$ and missing $y$. Therefore, $A \cup Y$ is a clopen upset of $\xduy$ containing $x$ and missing $y$.

Next suppose that $x,y \in Y$. Then there is a clopen upset $B$ of $Y$ containing $x$ and missing $y$. But then $B$
is also a clopen upset of $\xduy$ containing $x$ and missing $y$.

If $x \in Y$ and $y\in X$, then $Y$ is a clopen upset of $\xduy$ containing $x$ and missing $y$.
Finally, suppose that $x \in X$ and $y \in Y$. Since $x \nleq y$, we have $x \notin D$ or $y \notin U$.
If $x \notin D$, then since $D$ is a closed downset of $X$, there is a clopen upset $A$ of $X$ containing $x$ and disjoint from $D$.
Thus, $A$ is a clopen upset of $\xduy$ containing $x$ and missing $y$. If $y \notin U$, then since $U$ is a closed
upset of $Y$, there is a clopen downset $B$ of $Y$ containing $y$ and disjoint from $U$. Therefore, $A:=Y\setminus B$ is a clopen upset of $Y$
containing $U$ and missing $y$. Thus, $X \cup A$ is a clopen upset of $\xduy$ containing $x$ and missing $y$.
\end{proof}

\begin{theorem}
Let $L,M$ be bounded distributive lattices, $I$ an ideal of $L$, and $F$ a filter of $M$. Let also $X$ be the Priestley space of $L$,
$Y$ the Priestley space of $M$, $V$ an open upset of $X$ corresponding to the ideal $I$, $D:=X\setminus V$, and $U$ the closed upset
of $Y$ corresponding to the filter $F$. Then $\xduy$ is homeomorphic and order-isomorphic to the Priestley space of $\lifm$.
\end{theorem}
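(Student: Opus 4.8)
The plan is to bypass a direct description of the prime filters of $\lifm$ and instead identify $\lifm$ with the lattice of clopen upsets of $\xduy$. Since the preceding lemma shows $\xduy$ is a Priestley space, Priestley duality then gives that the dual space of $\mathrm{ClopUp}(\xduy)$ is homeomorphic and order-isomorphic to $\xduy$; as that dual space is the Priestley space of $\lifm$, we are done. Write $\alpha\colon L\to\mathrm{ClopUp}(X)$ and $\beta\colon M\to\mathrm{ClopUp}(Y)$ for the Priestley isomorphisms, regard $X$ and $Y$ as disjoint clopen subspaces of $\xduy$, and define $\phi\colon\lifm\to\mathrm{ClopUp}(\xduy)$ by $\phi(l,m)=\alpha(l)\cup\beta(m)$.

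First I would record two facts translating membership in $I$ and in $F$ into the geometry of $X$ and $Y$: for $a\in L$ one has $a\in I$ iff $\alpha(a)\subseteq V$ iff $\alpha(a)\cap D=\varnothing$; and for $m\in M$ one has $m\in F$ iff $U\subseteq\beta(m)$. Both follow from the standard compactness argument: $\alpha(a)$ is compact and $V=\bigcup\{\alpha(b)\mid b\in I\}$ is a union of opens, so $\alpha(a)\subseteq V$ forces $\alpha(a)\subseteq\alpha(b)$ for a single $b\in I$ (a finite join of members of $I$), whence $a\le b$ and $a\in I$; dually, $U=\bigcap\{\beta(k)\mid k\in F\}$ and $\beta(m)^c$ is clopen, so $U\subseteq\beta(m)$ forces $\beta(k_1)\cap\cdots\cap\beta(k_n)\subseteq\beta(m)$ for finitely many $k_i\in F$, hence $m\in F$.

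Using these I would check three things. (i) $\phi$ is well defined: $\alpha(l)\cup\beta(m)$ is clopen, its restriction to $X$ is an upset of $X$ and to $Y$ an upset of $Y$, and the only remaining condition for it to be an upset of $\xduy$ is the cross-condition ``$\alpha(l)\cap D\ne\varnothing$ implies $U\subseteq\beta(m)$'', i.e.\ ``$l\notin I$ implies $m\in F$'', which holds since $(l,m)\in\lifm$. (ii) $\phi$ is a bounded lattice homomorphism: this is routine once one notes $X\cap Y=\varnothing$, so the mixed intersections $\alpha(l_i)\cap\beta(m_j)$ vanish and $\phi$ commutes with $\meet$ and $\join$; also $\phi(0,0)=\varnothing$ and $\phi(1,1)=X\cup Y$. (iii) $\phi$ is injective: intersecting $\phi(l,m)$ with $X$ recovers $\alpha(l)$ and with $Y$ recovers $\beta(m)$, hence $l$ and $m$.

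The main point, and the one I expect to need the most care, is surjectivity. Given a clopen upset $W$ of $\xduy$, both $W\cap X$ and $W\cap Y$ are clopen upsets of $X$ and $Y$, so $W\cap X=\alpha(l)$ and $W\cap Y=\beta(m)$ for unique $l\in L$, $m\in M$, and $W=\phi(l,m)$; it remains to see $(l,m)\in\lifm$. Suppose $l\notin I$. By the first fact there is $x\in\alpha(l)\cap D$; for any $y\in U$ the definition of the order on $\xduy$ gives $x\le y$, and since $x\in W$ and $W$ is an upset we get $y\in W\cap Y=\beta(m)$. Hence $U\subseteq\beta(m)$, so $m\in F$ by the second fact. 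Thus $l\in I$ or $m\in F$, so $(l,m)\in\lifm$ and $\phi$ is onto. Therefore $\phi$ is a lattice isomorphism, and Priestley duality yields the claimed homeomorphism and order-isomorphism. The only slightly delicate ingredients are the two compactness lemmas and keeping careful track of the cross-order $D\le U$ in $\xduy$; the rest is bookkeeping.
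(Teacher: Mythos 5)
Your proposal is correct and follows essentially the same route as the paper: the same map $(l,m)\mapsto\alpha(l)\cup\beta(m)$, the same verification that it is a well-defined bounded lattice isomorphism onto the clopen upsets of $\xduy$ (with the identical surjectivity argument using $d\in\alpha(l)\cap D$ and ${\uparrow}d\supseteq U$), and the same appeal to Priestley duality to conclude. The only difference is that you spell out the compactness arguments behind ``$a\in I$ iff $\alpha(a)\cap D=\varnothing$'' and ``$m\in F$ iff $U\subseteq\beta(m)$,'' which the paper treats as the standard correspondence between ideals/filters and open/closed upsets.
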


\begin{proof}
Let $\alpha$ be a lattice isomorphism from $L$ onto the clopen upsets of $X$ and $\beta$ a lattice isomorphism from $M$ onto the clopen
upsets of $Y$. Define $\gamma$ from $\lifm$ to the clopen upsets of $\xduy$ by $\gamma(l,m)=\alpha(l)\cup\beta(m)$. Since $l\in I$,
we have $\alpha(l)\cap D=\varnothing$; and since $m\in F$, we have $U\subseteq\beta(m)$. Thus, $\gamma(l,m)$ is a clopen upset of
$\xduy$, and so $\gamma$ is well defined. It is straightforward to see that $\gamma$ is a one-to-one bounded lattice homomorphism.
To see that $\gamma$ is onto, let $A$ be a clopen upset of $\xduy$. Let $l\in L$ and $m\in M$ be such that $\alpha(l)=A\cap X$ and
$\beta(m)=A\cap Y$.
If $l\notin I$, then $\alpha(l) \cap D \neq \varnothing$. So there is $d \in D\cap A$, and since $A$ is an upset of $\xduy$, we have
${\uparrow} d \subseteq A$. But then, by the definition of the order on $\xduy$, we have that $U \subseteq {\uparrow} d \subseteq A$.
Therefore, $U \subseteq A \cap Y=\beta(m)$, and so $m\in F$. Thus, $(l,m)\in\lifm$, and hence $\gamma$ is onto. Consequently, $\lifm$
is isomorphic to the clopen upsets of $\xduy$, which by Priestley duality yields that $\xduy$ is homeomorphic and order-isomorphic to
the Priestley space of $\lifm$.
\end{proof}

\section{Counterexamples} \label{section:counterexamples}

In the definition of $\xduy$, when the closed upset $U$ coincides with $Y$, we denote $\xduy$ by $\xdy$.

\begin{lemma} \label{lem:D clopen}
Let $X,Y$ be Esakia spaces. Then $\xdy$ is an Esakia space iff $D$ is clopen in $X$.
\end{lemma}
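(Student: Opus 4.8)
The plan is to reduce both directions to one explicit description of downsets in $\xdy$. I would first record two standing facts: since $\xdy$ carries the disjoint union topology, $X$ and $Y$ are clopen in $\xdy$, so a set $A\subseteq\xdy$ is clopen iff $A_X:=A\cap X$ is clopen in $X$ and $A_Y:=A\cap Y$ is clopen in $Y$, and conversely any clopen subset of $X$ (or of $Y$) is clopen in $\xdy$. Then, using that in $\xdy$ no point of $Y$ lies below a point of $X$ while a point $x\in X$ lies below some point of $Y$ exactly when $x\in D$, I would verify the formula
\[
\darr A=\darr_X A_X\cup\darr_Y A_Y\cup D_A ,
\]
valid for every $A\subseteq\xdy$, where $\darr_X,\darr_Y$ denote downsets computed inside $X$ and $Y$, and $D_A:=D$ when $A_Y\neq\varnothing$ and $D_A:=\varnothing$ otherwise.

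Granting this, the proof is short. For the ``if'' direction, suppose $D$ is clopen in $X$, hence clopen in $\xdy$. If $A$ is clopen in $\xdy$, then $A_X,A_Y$ are clopen in $X,Y$, so $\darr_X A_X$ and $\darr_Y A_Y$ are clopen (because $X$ and $Y$ are Esakia spaces), and therefore clopen in $\xdy$; thus $\darr A$ is a union of clopen sets and is clopen, so $\xdy$ is an Esakia space. For the ``only if'' direction, suppose $\xdy$ is an Esakia space; we may assume $Y\neq\varnothing$ (if $Y=\varnothing$ then $\xdy=X$ and the statement is degenerate). Applying the formula to the clopen set $A=Y$ gives $\darr Y=Y\cup D$, which must be clopen in $\xdy$; intersecting with the clopen set $X$ yields that $D$ is clopen in $\xdy$, and since $D\subseteq X$ this makes $D$ clopen in the subspace $X$.

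I do not expect a genuine obstacle here: once the downset formula is established, both implications are immediate. The only delicate point is the bookkeeping in that formula — in particular, noticing that the ``mixed'' order relations contribute precisely the whole of $D$, and do so exactly when $A$ meets $Y$ — together with the harmless exclusion of the empty $Y$ in the converse.
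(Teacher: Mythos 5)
Your proposal is correct and follows essentially the same route as the paper: the ``if'' direction decomposes a clopen of $\xdy$ into its $X$- and $Y$-parts and uses exactly the same downset formula ${\downarrow}A={\downarrow}_X A_X\cup{\downarrow}_Y A_Y\cup D_A$, while your ``only if'' direction (applying the formula to $A=Y$ and intersecting with $X$) is just the paper's contrapositive argument stated directly. The only cosmetic difference is that you isolate the downset formula as a standing fact and note the degenerate case $Y=\varnothing$, which the paper leaves implicit.
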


\begin{proof}
Without loss of generality we may assume that $X$ and $Y$ are disjoint. First suppose that $D$ is not clopen in $X$.
We have that $Y$ is a clopen upset of $\xdy$ and ${\downarrow} Y= Y \cup D$. Since $(Y \cup D) \cap X= D$, we see
that $Y \cup D$ cannot be clopen in $\xdy$. Therefore, $\xdy$ is not an Esakia space.

Next suppose that $D$ is clopen in $X$. Any clopen in $\xdy$ can be written as $A \cup B$ where $A$ is clopen in $X$ and $B$ is clopen in $Y$.
If $B = \varnothing$, then ${\downarrow} (A \cup B) = {\downarrow} A$ is clopen in $X$, and hence clopen in $\xdy$. If $B \neq \varnothing$, then
${\downarrow} (A \cup B) = {\downarrow} A \cup {\downarrow} B = {\downarrow} A \cup ({\downarrow} B \cap Y) \cup D$. Since ${\downarrow} A,D$
are clopen in $X$ and ${\downarrow} B \cap Y$ is clopen in $Y$, we conclude that ${\downarrow} (A \cup B)$ is clopen in $\xdy$.
\end{proof}

\begin{remark}
\begin{enumerate}
\item[]
\item There is an obvious analogue of Lemma~\ref{lem:D clopen} for p-spaces: For p-spaces $X$ and $Y$, $\xdy$ is a p-space iff
$D$ is clopen in $X$.
\item If $Y= \{ y \}$ is a singleton, then in the definition of $\xdy$ we are adding only one point on top of $D$.
\end{enumerate}
\end{remark}

We are ready to give examples of non-metrizable (even non-sequential) Priestley spaces such that they are not Esakia spaces and yet
they do not contain the three forbidden configurations.

\begin{example} \label{ex:stonecech}
Let $X=\beta \omega$ be the Stone-\v{C}ech compactification of the discrete space $\omega$. We view $\beta \omega$
as an Esakia space with trivial order. Let $D= \beta \omega \setminus \omega$,
$Y= \{ y \}$, and consider $\xdy$; see Figure~\ref{fig:stonecech}.

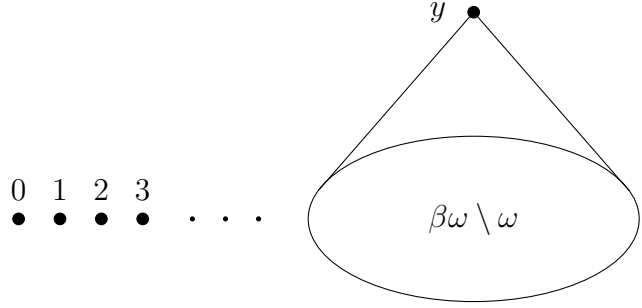
\begin{figure}[!h]
\begin{center}
\begin{tikzpicture}[scale=1.1]
\draw (2,0) ellipse (2cm and 1cm);
\draw [fill=black] (-2,0) circle (2.0pt);
\draw [fill=black] (-2.5,0) circle (2.0pt);
\draw [fill=black] (-3,0) circle (2.0pt);
\draw [fill=black] (-3.5,0) circle (2.0pt);
\draw [fill=black] (-0.6,0) circle (0.7pt);
\draw [fill=black] (-1.0,0) circle (0.7pt);
\draw [fill=black] (-1.4,0) circle (0.7pt);
\node [above] at (-3.5,0.1) {$0$};
\node [above] at (-3,0.1) {$1$};
\node [above] at (-2.5,0.1) {$2$};
\node [above] at (-2,0.1) {$3$};
\draw [fill=black] (2,2.5) circle (2.0pt);
\node [left] at (1.8,2.5) {$y$};
\draw (2,2.5) -- (3.88,0.34);
\draw (2,2.5) -- (0.12,0.34);
\node  at (2,0) {$\beta \omega \setminus \omega$};
\end{tikzpicture}
\end{center}
\caption{The space $X \oplus_D Y$ of Example~\ref{ex:stonecech}.}
\label{fig:stonecech}
\end{figure}

Since $D$ is not clopen, Lemma~\ref{lem:D clopen} implies that this is an example of a Priestley space that is not an Esakia space.
It is well known (see, e.g., \cite[Cor.~3.6.15]{eng89}) that there are no non-trivial convergent sequences in $\beta\omega$.
Therefore, there is no sequence in $({\downarrow}y)^c$ converging to an element of ${\downarrow} y$. Thus, $\xdy$
does not contain the three forbidden configurations.

The closed downset $D$ corresponds to the ideal $I:=\mathcal{P}_{\rm fin}(\omega)$ of finite subsets of $\mathcal{P}(\omega)$,
and the clopen upset $Y=\{ y \}$ corresponds to the filter $F=\{ 1 \}$ of $\bf 2$. Thus, the dual lattice of $\xdy$
is $\mathcal{P}(\omega) \times_I^F {\bf 2}$; see Figure \ref{fig:dualstonecech}.

\begin{figure}[!h]
\begin{center}
\hspace*{-0.7cm}
\begin{tikzpicture}[scale=0.8]
\begin{scope}[shift={(-3,-2.5)}]
    \draw (-2,0) arc [radius=2, start angle=180, end angle=360];
    \begin{scope}[yscale=0.7]
           \draw (-2,0) to [out=45, in=180] (-1.6,0.25);
            \draw (-1.6,0.25) to [out=0, in=135] (-1.2,0);
            \draw (-1.2,0) to [out=315, in=180] (-0.8,-0.25);
            \draw (-0.8,-0.25) to [out=0, in=45] (-0.4,0);
            \draw (-0.4,0) to [out=45, in=180] (0,0.25);
            \draw (0,0.25) to [out=0, in=135] (0.4,0);
            \draw (0.4,0) to [out=315, in=180] (0.8,-0.25);
            \draw (0.8,-0.25) to [out=0, in=45] (1.2,0);
            \draw (1.2,0) to [out=45, in=180] (1.6,0.25);
            \draw (1.6,0.25) to [out=0, in=135] (2,0);
        \end{scope}
    \node [below] at (0,-0.5) {$\mathcal{P}_{\rm fin}(\omega) \times \{ 0 \}$};
\end{scope}
\begin{scope}[shift={(3,0)}]
    \draw (-2,0) arc [radius=2, start angle=180, end angle=360];
    \node [below] at (0,1) {$\mathcal{P}(\omega) \times \{ 1 \}$};
\end{scope}
\begin{scope}[shift={(3,1)}]
    \begin{scope}[yscale=-1]
        \draw (-2,0) arc [radius=2, start angle=180, end angle=360];
    \end{scope}
\end{scope}
\draw (-4.8,-2.36) -- (1,0);
\draw (-2.12,-4.3) -- (3.83,-1.82);
\draw (1,0) -- (1,1);
\draw (5,0) -- (5,1);
\end{tikzpicture}
\end{center}
\caption{The dual lattice of the space $X \oplus_D Y$ of Example~\ref{ex:stonecech}.}
\label{fig:dualstonecech}
\end{figure}
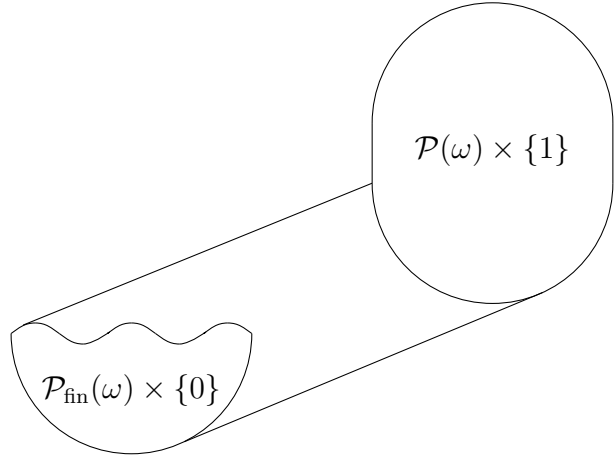
\end{example}

\begin{example}\label{ex:omega1plus1}
Let $\omega_1$ be the first uncountable ordinal, and let $X$ be the poset obtained by taking the dual order of $\omega_1+1$.
Endow $X$ with the interval topology. It is straightforward to check that $X$ is an Esakia space.
Let $D=\{ \omega_1 \} \subseteq X$,
let $Y=\{ y \}$, and consider $\xdy$; see Figure~\ref{fig:omega1plus1}.

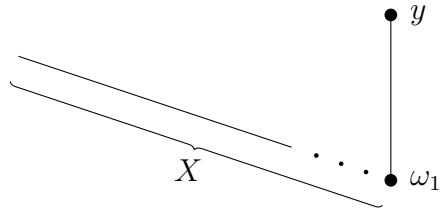
\begin{figure}[!h]
\begin{center}
\begin{tikzpicture}[scale=1.1]
\draw [fill=black] (0,0) circle (2.0pt);
\draw [fill=black] (0,2) circle (2.0pt);
\draw (0,0) -- (0,2);
\draw (-1.2,0.4) -- (-4.5,1.5);
\node [right] at (0.1,0) {$\omega_1$};
\node [right] at (0.1,2) {$y$};
\draw[decoration={brace,mirror,raise=10pt},decorate] (-4.5,1.5) -- (0,0);
\draw [fill=black] (-0.3,0.1) circle (0.6pt);
\draw [fill=black] (-0.6,0.2) circle (0.6pt);
\draw [fill=black] (-0.9,0.3) circle (0.6pt);
\node [right] at (-2.75,0.12) {$X$};
\end{tikzpicture}
\end{center}
\caption{The space $X \oplus_D Y$ of Example~\ref{ex:omega1plus1}.}
\label{fig:omega1plus1}
\end{figure}

Since $D$ is not clopen, Lemma~\ref{lem:D clopen} implies that $\xdy$ is not an Esakia space.
On the other hand,
there is no sequence in $X \setminus \{ \omega_1 \}$ converging to $ \omega_1$. Thus, $\xdy$
does not contain the three forbidden configurations.

The dual lattice of
$X$ is $\omega_1+1$, the closed downset $D$ corresponds to the ideal $I:=\omega_1$
of $\omega_1+1$, and the clopen upset $Y=\{ y \}$ corresponds to the filter $F=\{ 1 \}$ of $\bf 2$. Thus, the dual lattice of
$\xdy$ is $(\omega_1+1) \times_I^F {\bf 2}$; see Figure \ref{fig:uncountableladder}.
\end{example}

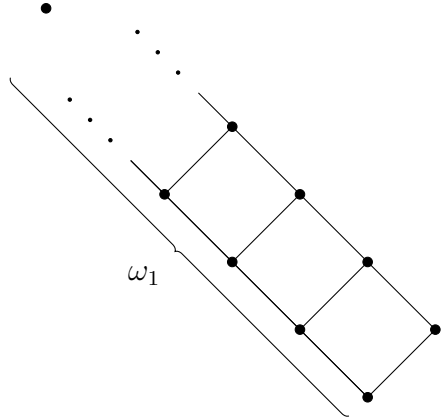
\begin{figure}[!h]
\begin{center}
\begin{tikzpicture}[scale=0.9]
\draw [fill=black] (0,0) circle (2.0pt);
\draw [fill=black] (-1,1) circle (2.0pt);
\draw [fill=black] (0,2) circle (2.0pt);
\draw [fill=black] (1,1) circle (2.0pt);
\draw [fill=black] (-1,3) circle (2.0pt);
\draw [fill=black] (-2,2) circle (2.0pt);
\draw [fill=black] (-2,4) circle (2.0pt);
\draw [fill=black] (-3,3) circle (2.0pt);
\draw [fill=black] (-4.75,5.75) circle (2.0pt);
\draw (0,0) -- (-1,1) -- (-2,2) -- (-3,3) -- (-3.5,3.5) ;
\draw (1,1) -- (0,2) -- (-1,3) -- (-2,4) -- (-2.5,4.5) ;
\draw (0,0) -- (-1,1) -- (-2,2) -- (-3,3) -- (-3.5,3.5) ;
\draw (0,0) -- (1,1);
\draw (-1,1) -- (0,2);
\draw (-2,2) -- (-1,3);
\draw (-3,3) -- (-2,4);
\draw [fill=black] (-3.8,3.8) circle (0.7pt);
\draw [fill=black] (-4.1,4.1) circle (0.7pt);
\draw [fill=black] (-4.4,4.4) circle (0.7pt);
\draw [fill=black] (-2.8,4.8) circle (0.7pt);
\draw [fill=black] (-3.1,5.1) circle (0.7pt);
\draw [fill=black] (-3.4,5.4) circle (0.7pt);
\draw[decoration={brace,raise=10pt},decorate] (0,0) -- (-5,5);
\node at (-3.3,1.8) {$\omega_1$};
\end{tikzpicture}
\end{center}
\caption{The dual lattice of the space $X \oplus_D Y$ of Example~\ref{ex:omega1plus1}.}
\label{fig:uncountableladder}
\end{figure}

\begin{remark}
\begin{enumerate}
\item[]
\item The space $\xdy$ of Example~\ref{ex:omega1plus1} can be thought of as a generalization of the forbidden configuration $Z_1$,
obtained by ``stretching" the chain $\{z_n\mid n\in \omega\}$. As a result, the chain $X \setminus \{ \omega_1 \} $ is ``too long"
to contain a sequence converging to $\omega_1$.
\item We can generalize the forbidden configuration $Z_2$ similarly by ``stretching" the antichain $\{z_n\mid n\in \omega\}$.
\item The space $\xdy$ of Example~\ref{ex:stonecech} can be thought of as a generalization of the forbidden configuration $Z_3$,
obtained by ``inflating" the point $x$. As a result,
we do not have sequences from $\omega$ converging inside $\beta\omega\setminus\omega$.
\end{enumerate}
\end{remark}

We note that in the definition of the three forbidden configurations, the condition on the open neighborhood $U$ of $e(y)$
cannot be dropped. This can be seen from a general observation that every Priestley space embeds into an Esakia space, and
hence every bounded distributive lattice is a homomorphic image of a Heyting algebra. To see this, let $L$ be a bounded
distributive lattice and $X$ its Priestley space. We let $F_L$ be the free bounded distributive lattice generated by the
underlying set of $L$. The identity on $L$ induces an onto lattice homomorphism $h \colon F_L \to L$. Dually, the onto homomorphism $h$
corresponds to an embedding $e \colon X \to 2^L$ where $2=\{0,1\}$ is the two-element discrete Priestley space with $0<1$.
Since $2$ is an Esakia space and products of Esakia spaces are Esakia spaces,
$2^L$ is an Esakia space. Thus, $F_L$ is a Heyting algebra.
Consequently, we cannot characterize Esakia spaces by forbidding embeddings of some Priestley spaces. This yields
that in the definition of the three forbidden configurations, the condition on the open neighborhood $U$ of $e(y)$
cannot be dropped.

In most cases, the space $2^L$
is rather complex.
We conclude the paper by presenting much simpler examples of Esakia spaces into which the Priestley spaces $Z_1, Z_2$ and $Z_3$ embed.

\begin{example} \label{ex:Z1embedcounterex}
Let $X$ be the disjoint union of two copies of the one-point compactification of the discrete space $\omega$, and let the order
on $X$ be defined as in Figure~\ref{fig:Z1embedcounterex}.

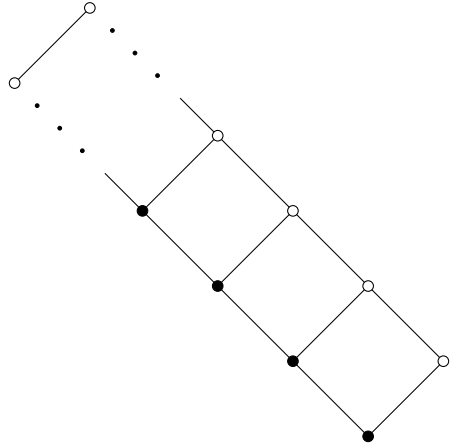
\begin{figure}[!h]
\begin{center}
\begin{tikzpicture}
\draw (0,0) -- (-1,1) -- (-2,2) -- (-3,3) -- (-3.5,3.5) ;
\draw (1,1) -- (-0,2) -- (-1,3) -- (-2,4) -- (-2.5,4.5) ;
\draw (0,0) -- (1,1);
\draw (-1,1) -- (0,2);
\draw (-2,2) -- (-1,3);
\draw (-3,3) -- (-2,4);
\draw (-4.7,4.7) -- (-3.7,5.7);
\draw [fill=black] (0,0) circle (2.0pt);
\draw [fill=black] (-1,1) circle (2.0pt);
\draw [fill=white] (0,2) circle (2.0pt);
\draw [fill=white] (1,1) circle (2.0pt);
\draw [fill=white] (-1,3) circle (2.0pt);
\draw [fill=black] (-2,2) circle (2.0pt);
\draw [fill=white] (-2,4) circle (2.0pt);
\draw [fill=black] (-3,3) circle (2.0pt);
\draw [fill=white] (-4.7,4.7) circle (2.0pt);
\draw [fill=white] (-3.7,5.7) circle (2.0pt);
\draw [fill=black] (-3.8,3.8) circle (0.7pt);
\draw [fill=black] (-4.1,4.1) circle (0.7pt);
\draw [fill=black] (-4.4,4.4) circle (0.7pt);
\draw [fill=black] (-2.8,4.8) circle (0.7pt);
\draw [fill=black] (-3.1,5.1) circle (0.7pt);
\draw [fill=black] (-3.4,5.4) circle (0.7pt);
\end{tikzpicture}
\caption{The space $X$ of Example~\ref{ex:Z1embedcounterex}. The white dots represent the image of $Z_1$
under the embedding of $Z_1$ into $X$.}
\label{fig:Z1embedcounterex}
\end{center}
\end{figure}

It is straightforward to check that $X$ is a metrizable Esakia space, and yet there is a topological and order embedding of $Z_1$ into
$X$, described by the white dots in the figure.
\end{example}

An analogous space for $Z_3$ can be constructed as follows.

\begin{example} \label{ex:Z3embedcounterex}
Let $X$ be the disjoint union of two copies of the one-point compactification of the discrete space $\omega$, and let the order
on $X$ be defined as in Figure~\ref{fig:Z3embedcounterex}.
\begin{figure}[!h]
\begin{center}
\begin{tikzpicture}
\draw (0,0) -- (0,1.5);
\draw (-1.5,1.5) -- (-1.5,0);
\draw (-3,0) -- (-3,1.5);
\draw (-4.5,1.5) -- (-4.5,0);
\draw (2.5,0) -- (2.5,1.5);
\draw [fill=white] (0,0) circle (2.0pt);
\draw [fill=black] (0,1.5) circle (2.0pt);
\draw [fill=white] (-1.5,0) circle (2.0pt);
\draw [fill=black] (-1.5,1.5) circle (2.0pt);
\draw [fill=white] (-3,0) circle (2.0pt);
\draw [fill=black] (-3,1.5) circle (2.0pt);
\draw [fill=white] (-4.5,0) circle (2.0pt);
\draw [fill=black] (-4.5,1.5) circle (2.0pt);
\draw [fill=white] (2.5,0) circle (2.0pt);
\draw [fill=white] (2.5,1.5) circle (2.0pt);
\draw [fill=black] (0.85,0) circle (0.7pt);
\draw [fill=black] (1.25,0) circle (0.7pt);
\draw [fill=black] (1.65,0) circle (0.7pt);
\draw [fill=black] (0.85,1.5) circle (0.7pt);
\draw [fill=black] (1.25,1.5) circle (0.7pt);
\draw [fill=black] (1.65,1.5) circle (0.7pt);
\end{tikzpicture}
\caption{The space $X$ of Example~\ref{ex:Z3embedcounterex}. The white dots represent the image of $Z_3$
under the embedding of $Z_3$ into $X$.}
\label{fig:Z3embedcounterex}
\end{center}
\end{figure}
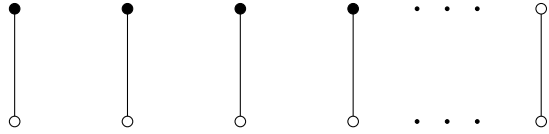

Then $X$ is a metrizable Esakia space, and the white dots describe a topological and order embedding of $Z_3$ into $X$.
\end{example}

We finish by constructing a metrizable Esakia space in which $Z_2$ is embedded, which is more complicated than
Examples~\ref{ex:Z1embedcounterex} and \ref{ex:Z3embedcounterex}.

\begin{example} \label{ex:Z2embedcounterex}
Let $X$ be a subspace of $\mathbb{R}^2$ as described in Figure~\ref{fig:Z2embedtopology} with each $\alpha_{mn}$ an isolated point,
each sequence $\{\alpha_{in}\mid n\in\omega\}$ converging to $\alpha_{i\omega}$, and each sequence $\{\alpha_{ni}\mid n\in\omega\}$
converging to $\alpha_{\omega i}$.

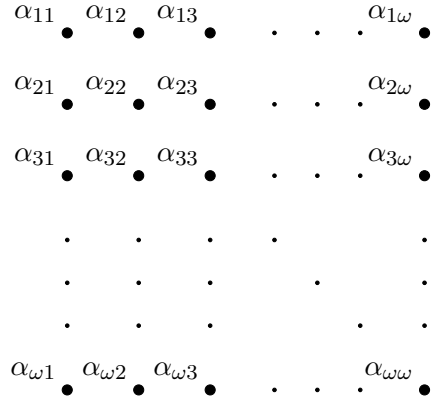
\begin{figure}[!h]
\begin{center}
\begin{tikzpicture}[scale=0.95]
\node [above left] at (0,0) {{\small $\alpha_{11}$}};
\node [above left] at (1,0) {{\small $\alpha_{12}$}};
\node [above left] at (2,0) {{\small $\alpha_{13}$}};
\node [above left] at (5,0) {{\small $\alpha_{1 \omega}$}};
\node [above left] at (0,-1) {{\small $\alpha_{21}$}};
\node [above left] at (1,-1) {{\small $\alpha_{22}$}};
\node [above left] at (2,-1) {{\small $\alpha_{23}$}};
\node [above left] at (5,-1) {{\small $\alpha_{2 \omega}$}};
\node [above left] at (0,-2) {{\small $\alpha_{31}$}};
\node [above left] at (1,-2) {{\small $\alpha_{32}$}};
\node [above left] at (2,-2) {{\small $\alpha_{33}$}};
\node [above left] at (5,-2) {{\small $\alpha_{3 \omega}$}};
\node [above left] at (0,-5) {{\small $\alpha_{\omega 1}$}};
\node [above left] at (1,-5) {{\small $\alpha_{\omega 2}$}};
\node [above left] at (2,-5) {{\small $\alpha_{\omega 3}$}};
\node [above left] at (5,-5) {{\small $\alpha_{\omega \omega}$}};
\draw [fill=black] (3.5,0) circle (0.7pt);
\draw [fill=black] (4.1,0) circle (0.7pt);
\draw [fill=black] (2.9,0) circle (0.7pt);
\draw [fill=black] (3.5,-1) circle (0.7pt);
\draw [fill=black] (4.1,-1) circle (0.7pt);
\draw [fill=black] (2.9,-1) circle (0.7pt);
\draw [fill=black] (3.5,-2) circle (0.7pt);
\draw [fill=black] (4.1,-2) circle (0.7pt);
\draw [fill=black] (2.9,-2) circle (0.7pt);
\draw [fill=black] (3.5,-5) circle (0.7pt);
\draw [fill=black] (4.1,-5) circle (0.7pt);
\draw [fill=black] (2.9,-5) circle (0.7pt);
\draw [fill=black] (0,-3.5) circle (0.7pt);
\draw [fill=black] (0,-4.1) circle (0.7pt);
\draw [fill=black] (0,-2.9) circle (0.7pt);
\draw [fill=black] (1,-3.5) circle (0.7pt);
\draw [fill=black] (1,-4.1) circle (0.7pt);
\draw [fill=black] (1,-2.9) circle (0.7pt);
\draw [fill=black] (2,-3.5) circle (0.7pt);
\draw [fill=black] (2,-4.1) circle (0.7pt);
\draw [fill=black] (2,-2.9) circle (0.7pt);
\draw [fill=black] (5,-3.5) circle (0.7pt);
\draw [fill=black] (5,-4.1) circle (0.7pt);
\draw [fill=black] (5,-2.9) circle (0.7pt);
\draw [fill=black] (3.5,-3.5) circle (0.7pt);
\draw [fill=black] (4.1,-4.1) circle (0.7pt);
\draw [fill=black] (2.9,-2.9) circle (0.7pt);
\draw [fill=black] (0,0) circle (2.0pt);
\draw [fill=black] (1,0) circle (2.0pt);
\draw [fill=black] (2,0) circle (2.0pt);
\draw [fill=black] (5,0) circle (2.0pt);
\draw [fill=black] (0,-1) circle (2.0pt);
\draw [fill=black] (1,-1) circle (2.0pt);
\draw [fill=black] (2,-1) circle (2.0pt);
\draw [fill=black] (5,-1) circle (2.0pt);
\draw [fill=black] (0,-2) circle (2.0pt);
\draw [fill=black] (1,-2) circle (2.0pt);
\draw [fill=black] (2,-2) circle (2.0pt);
\draw [fill=black] (5,-2) circle (2.0pt);
\draw [fill=black] (0,-5) circle (2.0pt);
\draw [fill=black] (1,-5) circle (2.0pt);
\draw [fill=black] (2,-5) circle (2.0pt);
\draw [fill=black] (5,-5) circle (2.0pt);
\end{tikzpicture}
\caption{The space $X$.
}
\label{fig:Z2embedtopology}
\end{center}
\end{figure}
Let $Y=\{\alpha_{mn}\mid m,n\in\omega\}$. Then $Y$ is a discrete subspace of $X$ and $X$ is a compactification of $Y$. Clearly $X$ is a
compact metrizable space.
Each clopen $U$ of $X$ is either a finite union of subsets of the form
\begin{itemize}
\item $\{ \alpha_{mn} \}$ for some $m,n \in \omega$;
\item $\{ \alpha_{m n} \mid h \le m \le \omega \}$ for some $h,n \in \omega$;
\item $\{ \alpha_{m n} \mid k \le n \le \omega \}$ for some $k,m \in \omega$;
\end{itemize}
or the complement of one of these finite unions.
From this it is easy to see that $X$ is a Stone space.

Define a partial order on $X$ by
\begin{equation*}
\alpha_{hk} \leq \alpha_{mn} \mbox{ iff } (h,k)=(m,n) \mbox{ or } h,k \geq m+n
\end{equation*}
where we set $m+n=\omega$ if at least one of $m,n$ is $\omega$.

\begin{figure}[!h]
\begin{center}
\begin{tikzpicture}[scale=0.75]
\path [fill=lightgray, rounded corners] (-0.7,-0.3) -- (-0.7,-1.7) -- (0.7,-1.7) -- (0.7,-0.3) -- cycle;
\path [fill=lightgray, rounded corners] (1.3,-1.3) -- (1.3,-5.7) -- (5.7,-5.7) -- (5.7,-1.3) -- cycle;
\node [above] at (0,-1) {$\alpha_{21}$};
\node [above] at (2,-2) {$\alpha_{33}$};
\node [above] at (5,-2) { $\alpha_{3 \omega}$};
\node [above] at (2,-5) { $\alpha_{\omega 3}$};
\node [above] at (5,-5) { $\alpha_{\omega \omega}$};
\draw [fill=black] (3.5,0) circle (0.7pt);
\draw [fill=black] (4.1,0) circle (0.7pt);
\draw [fill=black] (2.9,0) circle (0.7pt);
\draw [fill=black] (3.5,-1) circle (0.7pt);
\draw [fill=black] (4.1,-1) circle (0.7pt);
\draw [fill=black] (2.9,-1) circle (0.7pt);
\draw [fill=black] (3.5,-2) circle (0.7pt);
\draw [fill=black] (4.1,-2) circle (0.7pt);
\draw [fill=black] (2.9,-2) circle (0.7pt);
\draw [fill=black] (3.5,-5) circle (0.7pt);
\draw [fill=black] (4.1,-5) circle (0.7pt);
\draw [fill=black] (2.9,-5) circle (0.7pt);
\draw [fill=black] (0,-3.5) circle (0.7pt);
\draw [fill=black] (0,-4.1) circle (0.7pt);
\draw [fill=black] (0,-2.9) circle (0.7pt);
\draw [fill=black] (1,-3.5) circle (0.7pt);
\draw [fill=black] (1,-4.1) circle (0.7pt);
\draw [fill=black] (1,-2.9) circle (0.7pt);
\draw [fill=black] (2,-3.5) circle (0.7pt);
\draw [fill=black] (2,-4.1) circle (0.7pt);
\draw [fill=black] (2,-2.9) circle (0.7pt);
\draw [fill=black] (5,-3.5) circle (0.7pt);
\draw [fill=black] (5,-4.1) circle (0.7pt);
\draw [fill=black] (5,-2.9) circle (0.7pt);
\draw [fill=black] (3.5,-3.5) circle (0.7pt);
\draw [fill=black] (4.1,-4.1) circle (0.7pt);
\draw [fill=black] (2.9,-2.9) circle (0.7pt);
\draw [fill=black] (0,0) circle (2.0pt);
\draw [fill=black] (1,0) circle (2.0pt);
\draw [fill=black] (2,0) circle (2.0pt);
\draw [fill=black] (5,0) circle (2.0pt);
\draw [fill=black] (0,-1) circle (2.0pt);
\draw [fill=black] (1,-1) circle (2.0pt);
\draw [fill=black] (2,-1) circle (2.0pt);
\draw [fill=black] (5,-1) circle (2.0pt);
\draw [fill=black] (0,-2) circle (2.0pt);
\draw [fill=black] (1,-2) circle (2.0pt);
\draw [fill=black] (2,-2) circle (2.0pt);
\draw [fill=black] (5,-2) circle (2.0pt);
\draw [fill=black] (0,-5) circle (2.0pt);
\draw [fill=black] (1,-5) circle (2.0pt);
\draw [fill=black] (2,-5) circle (2.0pt);
\draw [fill=black] (5,-5) circle (2.0pt);
\node [below] at (3.2,-5.7) { $\darr \alpha_{21}$};
\end{tikzpicture}
\hspace{3.5cm}
\begin{tikzpicture}[scale=0.75]
\path [fill=lightgray, rounded corners] (1.3,-5.7) -- (2.7,-5.7) -- (2.7,-4.3) -- (1.3,-4.3) -- cycle;
\path [fill=lightgray, rounded corners] (4.3,-5.7) -- (5.7,-5.7) -- (5.7,-4.3) -- (4.3,-4.3) -- cycle;
\node [above] at (2,-5) { $\alpha_{\omega 3}$};
\node [above] at (5,-5) { $\alpha_{\omega \omega}$};
\draw [fill=black] (3.5,0) circle (0.7pt);
\draw [fill=black] (4.1,0) circle (0.7pt);
\draw [fill=black] (2.9,0) circle (0.7pt);
\draw [fill=black] (3.5,-1) circle (0.7pt);
\draw [fill=black] (4.1,-1) circle (0.7pt);
\draw [fill=black] (2.9,-1) circle (0.7pt);
\draw [fill=black] (3.5,-2) circle (0.7pt);
\draw [fill=black] (4.1,-2) circle (0.7pt);
\draw [fill=black] (2.9,-2) circle (0.7pt);
\draw [fill=black] (3.5,-5) circle (0.7pt);
\draw [fill=black] (4.1,-5) circle (0.7pt);
\draw [fill=black] (2.9,-5) circle (0.7pt);
\draw [fill=black] (0,-3.5) circle (0.7pt);
\draw [fill=black] (0,-4.1) circle (0.7pt);
\draw [fill=black] (0,-2.9) circle (0.7pt);
\draw [fill=black] (1,-3.5) circle (0.7pt);
\draw [fill=black] (1,-4.1) circle (0.7pt);
\draw [fill=black] (1,-2.9) circle (0.7pt);
\draw [fill=black] (2,-3.5) circle (0.7pt);
\draw [fill=black] (2,-4.1) circle (0.7pt);
\draw [fill=black] (2,-2.9) circle (0.7pt);
\draw [fill=black] (5,-3.5) circle (0.7pt);
\draw [fill=black] (5,-4.1) circle (0.7pt);
\draw [fill=black] (5,-2.9) circle (0.7pt);
\draw [fill=black] (3.5,-3.5) circle (0.7pt);
\draw [fill=black] (4.1,-4.1) circle (0.7pt);
\draw [fill=black] (2.9,-2.9) circle (0.7pt);
\draw [fill=black] (0,0) circle (2.0pt);
\draw [fill=black] (1,0) circle (2.0pt);
\draw [fill=black] (2,0) circle (2.0pt);
\draw [fill=black] (5,0) circle (2.0pt);
\draw [fill=black] (0,-1) circle (2.0pt);
\draw [fill=black] (1,-1) circle (2.0pt);
\draw [fill=black] (2,-1) circle (2.0pt);
\draw [fill=black] (5,-1) circle (2.0pt);
\draw [fill=black] (0,-2) circle (2.0pt);
\draw [fill=black] (1,-2) circle (2.0pt);
\draw [fill=black] (2,-2) circle (2.0pt);
\draw [fill=black] (5,-2) circle (2.0pt);
\draw [fill=black] (0,-5) circle (2.0pt);
\draw [fill=black] (1,-5) circle (2.0pt);
\draw [fill=black] (2,-5) circle (2.0pt);
\draw [fill=black] (5,-5) circle (2.0pt);
\node [below] at (3.2,-5.7) { $\darr \alpha_{\omega 3}$};
\end{tikzpicture}
\newline
\newline
\begin{tikzpicture}[scale=0.75]
\path [fill=lightgray, rounded corners] (4.3,-5.7) -- (5.7,-5.7) -- (5.7,-4.3) -- (4.3,-4.3) -- cycle;
\node [above] at (5,-5) { $\alpha_{\omega \omega}$};
\draw [fill=black] (3.5,0) circle (0.7pt);
\draw [fill=black] (4.1,0) circle (0.7pt);
\draw [fill=black] (2.9,0) circle (0.7pt);
\draw [fill=black] (3.5,-1) circle (0.7pt);
\draw [fill=black] (4.1,-1) circle (0.7pt);
\draw [fill=black] (2.9,-1) circle (0.7pt);
\draw [fill=black] (3.5,-2) circle (0.7pt);
\draw [fill=black] (4.1,-2) circle (0.7pt);
\draw [fill=black] (2.9,-2) circle (0.7pt);
\draw [fill=black] (3.5,-5) circle (0.7pt);
\draw [fill=black] (4.1,-5) circle (0.7pt);
\draw [fill=black] (2.9,-5) circle (0.7pt);
\draw [fill=black] (0,-3.5) circle (0.7pt);
\draw [fill=black] (0,-4.1) circle (0.7pt);
\draw [fill=black] (0,-2.9) circle (0.7pt);
\draw [fill=black] (1,-3.5) circle (0.7pt);
\draw [fill=black] (1,-4.1) circle (0.7pt);
\draw [fill=black] (1,-2.9) circle (0.7pt);
\draw [fill=black] (2,-3.5) circle (0.7pt);
\draw [fill=black] (2,-4.1) circle (0.7pt);
\draw [fill=black] (2,-2.9) circle (0.7pt);
\draw [fill=black] (5,-3.5) circle (0.7pt);
\draw [fill=black] (5,-4.1) circle (0.7pt);
\draw [fill=black] (5,-2.9) circle (0.7pt);
\draw [fill=black] (3.5,-3.5) circle (0.7pt);
\draw [fill=black] (4.1,-4.1) circle (0.7pt);
\draw [fill=black] (2.9,-2.9) circle (0.7pt);
\draw [fill=black] (0,0) circle (2.0pt);
\draw [fill=black] (1,0) circle (2.0pt);
\draw [fill=black] (2,0) circle (2.0pt);
\draw [fill=black] (5,0) circle (2.0pt);
\draw [fill=black] (0,-1) circle (2.0pt);
\draw [fill=black] (1,-1) circle (2.0pt);
\draw [fill=black] (2,-1) circle (2.0pt);
\draw [fill=black] (5,-1) circle (2.0pt);
\draw [fill=black] (0,-2) circle (2.0pt);
\draw [fill=black] (1,-2) circle (2.0pt);
\draw [fill=black] (2,-2) circle (2.0pt);
\draw [fill=black] (5,-2) circle (2.0pt);
\draw [fill=black] (0,-5) circle (2.0pt);
\draw [fill=black] (1,-5) circle (2.0pt);
\draw [fill=black] (2,-5) circle (2.0pt);
\draw [fill=black] (5,-5) circle (2.0pt);
\node [below] at (2.7,-5.7) { $\darr \alpha_{\omega \omega}$};
\end{tikzpicture}
\caption{The principal downsets ${\downarrow} \alpha_{21}$, ${\downarrow} \alpha_{ \omega 3}$, and ${\downarrow} \alpha_{\omega \omega}$.}
\label{fig:Z2embeddown}
\end{center}
\end{figure}
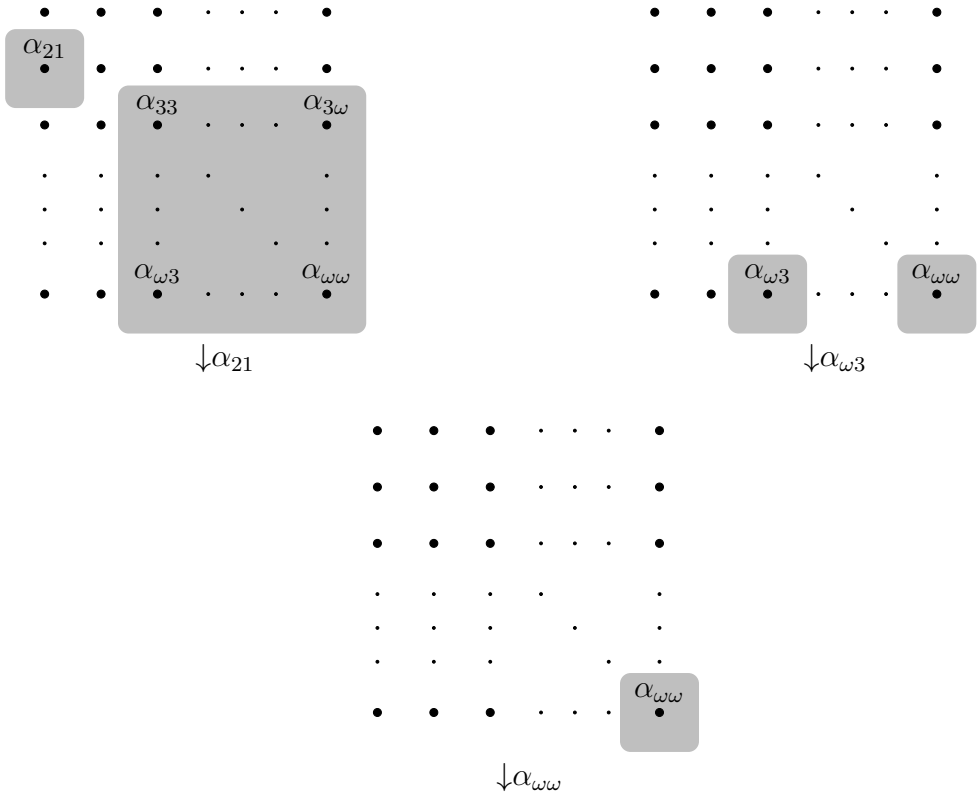

Figure~\ref{fig:Z2embeddown} shows how to calculate principal downsets
of points of $X$. From this and the description of clopens of $X$ it is straightforward to check that the downset of each
clopen is clopen.

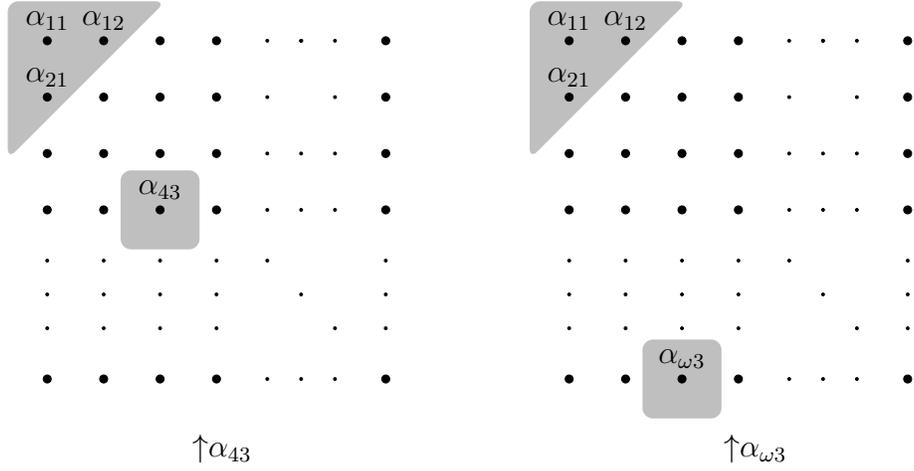
\begin{figure}[!h]
\begin{center}
\begin{tikzpicture}[scale=0.75]
\path [fill=lightgray, rounded corners] (0.3,-2.7) -- (1.7,-2.7) -- (1.7,-1.3) -- (0.3,-1.3) -- cycle;
\path [fill=lightgray, rounded corners] (-1.7,1.7) -- (-1.7,-1.1) -- (1.1,1.7) -- cycle;
\node [above] at (-1,1) { $\alpha_{1 1}$};
\node [above] at (-1,0) { $\alpha_{2 1}$};
\node [above] at (0,1) { $\alpha_{1 2}$};
\node [above] at (1,-2) { $\alpha_{4 3}$};
\draw [fill=black] (3.5,1) circle (0.7pt);
\draw [fill=black] (4.1,1) circle (0.7pt);
\draw [fill=black] (2.9,1) circle (0.7pt);
\draw [fill=black] (3.5,1) circle (0.7pt);
\draw [fill=black] (-1,-3.5) circle (0.7pt);
\draw [fill=black] (-1,-4.1) circle (0.7pt);
\draw [fill=black] (-1,-2.9) circle (0.7pt);
\draw [fill=black] (4.1,0) circle (0.7pt);
\draw [fill=black] (2.9,0) circle (0.7pt);
\draw [fill=black] (3.5,-1) circle (0.7pt);
\draw [fill=black] (4.1,-1) circle (0.7pt);
\draw [fill=black] (2.9,-1) circle (0.7pt);
\draw [fill=black] (3.5,-2) circle (0.7pt);
\draw [fill=black] (4.1,-2) circle (0.7pt);
\draw [fill=black] (2.9,-2) circle (0.7pt);
\draw [fill=black] (3.5,-5) circle (0.7pt);
\draw [fill=black] (4.1,-5) circle (0.7pt);
\draw [fill=black] (2.9,-5) circle (0.7pt);
\draw [fill=black] (0,-3.5) circle (0.7pt);
\draw [fill=black] (0,-4.1) circle (0.7pt);
\draw [fill=black] (0,-2.9) circle (0.7pt);
\draw [fill=black] (1,-3.5) circle (0.7pt);
\draw [fill=black] (1,-4.1) circle (0.7pt);
\draw [fill=black] (1,-2.9) circle (0.7pt);
\draw [fill=black] (2,-3.5) circle (0.7pt);
\draw [fill=black] (2,-4.1) circle (0.7pt);
\draw [fill=black] (2,-2.9) circle (0.7pt);
\draw [fill=black] (5,-3.5) circle (0.7pt);
\draw [fill=black] (5,-4.1) circle (0.7pt);
\draw [fill=black] (5,-2.9) circle (0.7pt);
\draw [fill=black] (3.5,-3.5) circle (0.7pt);
\draw [fill=black] (4.1,-4.1) circle (0.7pt);
\draw [fill=black] (2.9,-2.9) circle (0.7pt);
\draw [fill=black] (0,0) circle (2.0pt);
\draw [fill=black] (-1,1) circle (2.0pt);
\draw [fill=black] (0,1) circle (2.0pt);
\draw [fill=black] (1,1) circle (2.0pt);
\draw [fill=black] (2,1) circle (2.0pt);
\draw [fill=black] (5,1) circle (2.0pt);
\draw [fill=black] (-1,0) circle (2.0pt);
\draw [fill=black] (-1,-1) circle (2.0pt);
\draw [fill=black] (-1,-2) circle (2.0pt);
\draw [fill=black] (-1,-5) circle (2.0pt);
\draw [fill=black] (1,0) circle (2.0pt);
\draw [fill=black] (2,0) circle (2.0pt);
\draw [fill=black] (5,0) circle (2.0pt);
\draw [fill=black] (0,-1) circle (2.0pt);
\draw [fill=black] (1,-1) circle (2.0pt);
\draw [fill=black] (2,-1) circle (2.0pt);
\draw [fill=black] (5,-1) circle (2.0pt);
\draw [fill=black] (0,-2) circle (2.0pt);
\draw [fill=black] (1,-2) circle (2.0pt);
\draw [fill=black] (2,-2) circle (2.0pt);
\draw [fill=black] (5,-2) circle (2.0pt);
\draw [fill=black] (0,-5) circle (2.0pt);
\draw [fill=black] (1,-5) circle (2.0pt);
\draw [fill=black] (2,-5) circle (2.0pt);
\draw [fill=black] (5,-5) circle (2.0pt);
\node [above] at (5,-5.5) {};
\node [below] at (2.1,-5.8) { $\uarr \alpha_{43}$};
\end{tikzpicture}
\hspace{1.5cm}
\begin{tikzpicture}[scale=0.75]
\path [fill=lightgray, rounded corners] (0.3,-5.7) -- (1.7,-5.7) -- (1.7,-4.3) -- (0.3,-4.3) -- cycle;
\path [fill=lightgray, rounded corners] (-1.7,1.7) -- (-1.7,-1.1) -- (1.1,1.7) -- cycle;
\node [above] at (-1,1) { $\alpha_{1 1}$};
\node [above] at (-1,0) { $\alpha_{2 1}$};
\node [above] at (0,1) { $\alpha_{1 2}$};
\node [above] at (1,-5) { $\alpha_{\omega 3}$};
\draw [fill=black] (3.5,1) circle (0.7pt);
\draw [fill=black] (4.1,1) circle (0.7pt);
\draw [fill=black] (2.9,1) circle (0.7pt);
\draw [fill=black] (3.5,1) circle (0.7pt);
\draw [fill=black] (-1,-3.5) circle (0.7pt);
\draw [fill=black] (-1,-4.1) circle (0.7pt);
\draw [fill=black] (-1,-2.9) circle (0.7pt);
\draw [fill=black] (4.1,0) circle (0.7pt);
\draw [fill=black] (2.9,0) circle (0.7pt);
\draw [fill=black] (3.5,-1) circle (0.7pt);
\draw [fill=black] (4.1,-1) circle (0.7pt);
\draw [fill=black] (2.9,-1) circle (0.7pt);
\draw [fill=black] (3.5,-2) circle (0.7pt);
\draw [fill=black] (4.1,-2) circle (0.7pt);
\draw [fill=black] (2.9,-2) circle (0.7pt);
\draw [fill=black] (3.5,-5) circle (0.7pt);
\draw [fill=black] (4.1,-5) circle (0.7pt);
\draw [fill=black] (2.9,-5) circle (0.7pt);
\draw [fill=black] (0,-3.5) circle (0.7pt);
\draw [fill=black] (0,-4.1) circle (0.7pt);
\draw [fill=black] (0,-2.9) circle (0.7pt);
\draw [fill=black] (1,-3.5) circle (0.7pt);
\draw [fill=black] (1,-4.1) circle (0.7pt);
\draw [fill=black] (1,-2.9) circle (0.7pt);
\draw [fill=black] (2,-3.5) circle (0.7pt);
\draw [fill=black] (2,-4.1) circle (0.7pt);
\draw [fill=black] (2,-2.9) circle (0.7pt);
\draw [fill=black] (5,-3.5) circle (0.7pt);
\draw [fill=black] (5,-4.1) circle (0.7pt);
\draw [fill=black] (5,-2.9) circle (0.7pt);
\draw [fill=black] (3.5,-3.5) circle (0.7pt);
\draw [fill=black] (4.1,-4.1) circle (0.7pt);
\draw [fill=black] (2.9,-2.9) circle (0.7pt);
\draw [fill=black] (0,0) circle (2.0pt);
\draw [fill=black] (-1,1) circle (2.0pt);
\draw [fill=black] (0,1) circle (2.0pt);
\draw [fill=black] (1,1) circle (2.0pt);
\draw [fill=black] (2,1) circle (2.0pt);
\draw [fill=black] (5,1) circle (2.0pt);
\draw [fill=black] (-1,0) circle (2.0pt);
\draw [fill=black] (-1,-1) circle (2.0pt);
\draw [fill=black] (-1,-2) circle (2.0pt);
\draw [fill=black] (-1,-5) circle (2.0pt);
\draw [fill=black] (1,0) circle (2.0pt);
\draw [fill=black] (2,0) circle (2.0pt);
\draw [fill=black] (5,0) circle (2.0pt);
\draw [fill=black] (0,-1) circle (2.0pt);
\draw [fill=black] (1,-1) circle (2.0pt);
\draw [fill=black] (2,-1) circle (2.0pt);
\draw [fill=black] (5,-1) circle (2.0pt);
\draw [fill=black] (0,-2) circle (2.0pt);
\draw [fill=black] (1,-2) circle (2.0pt);
\draw [fill=black] (2,-2) circle (2.0pt);
\draw [fill=black] (5,-2) circle (2.0pt);
\draw [fill=black] (0,-5) circle (2.0pt);
\draw [fill=black] (1,-5) circle (2.0pt);
\draw [fill=black] (2,-5) circle (2.0pt);
\draw [fill=black] (5,-5) circle (2.0pt);
\node [below] at (2.3,-5.8) { $\uarr \alpha_{\omega 3}$};
\end{tikzpicture}
\caption{The principal upsets ${\uparrow} \alpha_{43}$ and ${\uparrow} \alpha_{\omega 3}$.}
\label{fig:Z2embedup}
\end{center}
\end{figure}

Figure~\ref{fig:Z2embedup} describes how to calculate principal upsets of points of $X$. From this and the fact that $\alpha_{\omega \omega}$
is the least element of $X$, it is easy to see that the upset of each point of $X$ is closed. Thus, $X$ is an Esakia space
(see \cite{esa74}). We can embed $Z_2$ into $X$ via the map defined by $y \mapsto \alpha_{1 \omega}$,
$z_i \mapsto \alpha_{\omega \, i+1}$, and $x \mapsto \alpha_{\omega \omega}$; see Figure~\ref{fig:Z2embedembed}.

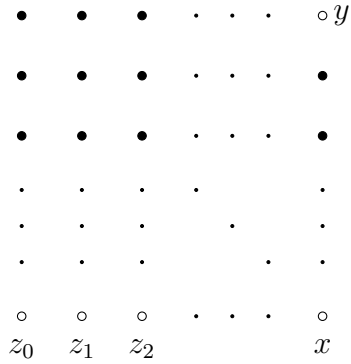
\begin{figure}[!h]
\begin{center}
\begin{tikzpicture}[scale=0.8]
\draw [fill=black] (3.5,0) circle (0.7pt);
\draw [fill=black] (4.1,0) circle (0.7pt);
\draw [fill=black] (2.9,0) circle (0.7pt);
\draw [fill=black] (3.5,-1) circle (0.7pt);
\draw [fill=black] (4.1,-1) circle (0.7pt);
\draw [fill=black] (2.9,-1) circle (0.7pt);
\draw [fill=black] (3.5,-2) circle (0.7pt);
\draw [fill=black] (4.1,-2) circle (0.7pt);
\draw [fill=black] (2.9,-2) circle (0.7pt);
\draw [fill=black] (3.5,-5) circle (0.7pt);
\draw [fill=black] (4.1,-5) circle (0.7pt);
\draw [fill=black] (2.9,-5) circle (0.7pt);
\draw [fill=black] (0,-3.5) circle (0.7pt);
\draw [fill=black] (0,-4.1) circle (0.7pt);
\draw [fill=black] (0,-2.9) circle (0.7pt);
\draw [fill=black] (1,-3.5) circle (0.7pt);
\draw [fill=black] (1,-4.1) circle (0.7pt);
\draw [fill=black] (1,-2.9) circle (0.7pt);
\draw [fill=black] (2,-3.5) circle (0.7pt);
\draw [fill=black] (2,-4.1) circle (0.7pt);
\draw [fill=black] (2,-2.9) circle (0.7pt);
\draw [fill=black] (5,-3.5) circle (0.7pt);
\draw [fill=black] (5,-4.1) circle (0.7pt);
\draw [fill=black] (5,-2.9) circle (0.7pt);
\draw [fill=black] (3.5,-3.5) circle (0.7pt);
\draw [fill=black] (4.1,-4.1) circle (0.7pt);
\draw [fill=black] (2.9,-2.9) circle (0.7pt);
\draw [fill=black] (0,0) circle (2.0pt);
\draw [fill=black] (1,0) circle (2.0pt);
\draw [fill=black] (2,0) circle (2.0pt);
\draw [fill=white] (5,0) circle (2.0pt);
\draw [fill=black] (0,-1) circle (2.0pt);
\draw [fill=black] (1,-1) circle (2.0pt);
\draw [fill=black] (2,-1) circle (2.0pt);
\draw [fill=black] (5,-1) circle (2.0pt);
\draw [fill=black] (0,-2) circle (2.0pt);
\draw [fill=black] (1,-2) circle (2.0pt);
\draw [fill=black] (2,-2) circle (2.0pt);
\draw [fill=black] (5,-2) circle (2.0pt);
\draw [fill=white] (0,-5) circle (2.0pt);
\draw [fill=white] (1,-5) circle (2.0pt);
\draw [fill=white] (2,-5) circle (2.0pt);
\draw [fill=white] (5,-5) circle (2.0pt);
\node [below] at (0,-5.2) {$z_0$};
\node [below] at (1,-5.2) {$z_1$};
\node [below] at (2,-5.2) {$z_2$};
\node [below] at (5,-5.2) {$x$};
\node [right] at (5,0) {$y$};
\end{tikzpicture}
\caption{The embedding of $Z_2$ into $X$.}
\label{fig:Z2embedembed}
\end{center}
\end{figure}
\end{example}

\section*{Acknowledgment}
We are grateful to the referee for careful reading and the comments which have improved the presentation of the paper.

\end{document}